\theoremstyle{plain}
\newtheorem{theorem}[subsection]{Theorem}
\newtheorem{lemma}[subsection]{Lemma}
\newtheorem{proposition}[subsection]{Proposition}
\newtheorem{corollary}[subsection]{Corollary}
\theoremstyle{definition}
\newenvironment{tfae}
{
\begin{enumerate}}
{\end{enumerate}}
\newcommand{\comp}{\raisebox{0.2mm}{\ensuremath{\scriptstyle{\circ}}}}
\newcommand{\defn}{\textbf}
\newcommand{\noproof}{\hfill \qed}
\newcommand{\To}{\Rightarrow} 
\newcommand{\mus}[2]{\lgroup #1 \; #2 \rgroup}
\newcommand{\musv}[2]{\bigl\lgroup\begin{smallmatrix} #1 \\ #2 \end{smallmatrix}\bigr\rgroup}
\newcommand{\musvv}[2]{\Biggl\lgroup\begin{smallmatrix} #1 \\ \vdots\\ \\ #2 \end{smallmatrix}\Biggr\rgroup}
\newcommand{\defeq}{\coloneq}
\newcommand{\cosmash}{\diamond} 
\newcommand{\normal}{\lhd} 
\newcommand{\join}{\vee} 
\newcommand{\gnab}{\text{\rm \textexclamdown}}
\DeclareMathOperator{\bigdiamond}{\raisebox{-0.9mm}{\scalebox{1.5}{$\Diamond$}}}
\DeclareMathOperator{\cod}{Cod}
\DeclareMathOperator{\dom}{Dom}
\DeclareMathOperator{\Ker}{Ker}
\DeclareMathOperator{\D}{D}
\renewcommand{\L}{\mathrm{L}}
\DeclareMathOperator{\op}{op}
\DeclareMathOperator{\Cr}{Cr}
\newcommand{\CC}{\ensuremath{\mathbb{C}}}
\newcommand{\DD}{\ensuremath{\mathbb{D}}}
\newcommand{\KK}{\ensuremath{\mathbb{K}}}
\newcommand{\PP}{\ensuremath{\mathcal{P}}}
\newcommand{\VV}{\ensuremath{\mathcal{V}}}
\newcommand{\Alg}{\ensuremath{\mathsf{Alg}}}
\newcommand{\Fun}{\ensuremath{\mathsf{Fun}}}
\newcommand{\Arr}{\ensuremath{\mathsf{Arr}}}
\newcommand{\Ext}{\ensuremath{\mathsf{Ext}}}
\newcommand{\Pt}{\ensuremath{\mathsf{Pt}}}
\newcommand{\Nil}{\ensuremath{\mathsf{Nil}}}
\newcommand{\NH}{\ensuremath{{\rm (NH)}}}
\newcommand{\SH}{\ensuremath{{\rm (SH)}}}
\author{Cyrille Sandry Simeu}
\author{Tim Van~der Linden}
\address[Cyrille Sandry Simeu, Tim Van~der Linden]{Institut de
Recherche en Math\'ematique et Physique, Universit\'e catholique
de Louvain, che\-min du cyclotron~2 bte~L7.01.02, B--1348
Louvain-la-Neuve, Belgium}
\email[Cyrille Sandry Simeu]{cyrille.simeu@uclouvain.be}
\email[Tim Van~der Linden]{tim.vanderlinden@uclouvain.be}
\thanks{The second author is a Research Associate of the Fonds de la Recherche Scientifique--FNRS}
\subjclass[2010]{17B30, 18D35, 18E10, 18G50, 20F18, 20J06}
\keywords{Semi-abelian, algebraically coherent category; nilpotency; Higgins commutator; cross-effect.}
\def\pullback{
 \ar@{-}[]+R+<6pt,-1pt>;[]+RD+<6pt,-6pt>%
 \ar@{-}[]+D+<1pt,-6pt>;[]+RD+<6pt,-6pt>}
\def\dottedpullback{%
 \ar@{.}[]+R+<6pt,-1pt>;[]+RD+<6pt,-6pt>%
 \ar@{.}[]+D+<1pt,-6pt>;[]+RD+<6pt,-6pt>}
\begin{document}

\title[On the ``three subobjects lemma'']{On the ``three subobjects lemma''\\ and its higher-order generalisations}

\begin{abstract}
We solve a problem mentioned in the article~\cite{BeBou} of Berger and Bourn: we prove that in the context of an algebraically coherent semi-abelian category, two natural definitions of the lower central series coincide. 

In a first, ``standard'' approach, nilpotency is defined as in group theory via nested binary commutators of the form $[[X,X],X]$. In a second approach, higher Higgins commutators of the form $[X,X,X]$ are used to define nilpotent objects~\cite{HVdL, Actions, Higgins}. The two are known to be different in general; for instance, in the context of loops, the definition of Bruck~\cite{Bruck} is of the former kind, while the \emph{commutator-associator filtration} of Mostovoy~\cite{Mostovoy1, Mostovoy2} and his co-authors is of the latter type. Another example, in the context of Moufang loops, is given in~\cite{BeBou}.

In this article, we show that the two streams of development agree in any algebraically coherent semi-abelian category. Such are, for instance, all \emph{Orzech categories of interest}~\cite{Orzech}. Our proof of this result is based on a higher-order version of the \emph{Three Subobjects Lemma} of~\cite{acc}, which extends the classical \emph{Three Subgroups Lemma} from group theory to categorical algebra. It says that any $n$-fold Higgins commutator $[K_1, \dots,K_n]$ of normal subobjects $K_i\normal X$ may be decomposed into a join of nested binary commutators. 
\end{abstract}

\maketitle

\section{Introduction}
In his 1956 article~\cite{Higgins}, Higgins introduced a commutator $[K_1, \dots,K_n]\leq X$ of~$n$ subobjects $K_1$, \dots, $K_n$ of an object $X$, for any $n\geq 2$. Remarkable about this definition of his is, that it is not biased towards the case $n=2$; in particular, it is far from true that $[K_1, \dots,K_n]$ can always be obtained via nested binary commutators of the form $[[\cdots [[K_1,K_2],K_3], \dots,K_{n-1}],K_n]$. On the other hand, Higgins shows that ternary commutators are in some sense unavoidable, since they occur naturally amongst binary ones, for instance in the join decomposition formula
\[
[K_1,K_2\join K_3]=[K_1,K_2]\join [K_1,K_3]\join [K_1,K_2,K_3]\leq X.
\]

His study of the commutator takes place in the context of \emph{varieties of $\Omega$-groups}, which are pointed varieties of universal algebras, whose theory contains the operations and identities of the theory of groups. As it turns out, the concept is not limited to this setting. The main definitions (which we shall recall below in~\ref{Definition Commutator}) based on the concept of a \emph{co-smash product}~\cite{Smash} or a \emph{cross-effect}~\cite{Hartl-Vespa} can be made in pointed regular categories with finite coproducts~\cite{MM-NC, Actions, HVdL}. Results such as the join decomposition formula hold in any finitely cocomplete \emph{homological} category (in~the sense of Borceux--Bourn~\cite{Borceux-Bourn}). In particular, they are valid in all Janelidze--Márki--Tholen \emph{semi-abelian} categories~\cite{Janelidze-Marki-Tholen}, which by definition are pointed with binary sums, Barr-exact (=~regular, and such that every equivalence relation is a kernel pair), and Bourn-protomodular (=~the Split Short Five Lemma holds).

Higgins commutators can be used, for instance, to express centrality of higher extensions~\cite{ESVdL}; these occur in the context of semi-abelian categories, in the Hopf formulae for homology via Galois theory~\cite{EGVdL} and in an interpretation of cohomology with trivial coefficients~\cite{RVdL2}. They also appear in the description of internal crossed modules~\cite{Janelidze} given in~\cite{HVdL}, and are closely related to the treatment of internal actions via cosmash products established in~\cite{Actions}.

The difference between biased and unbiased $n$-ary commutators is reflected in two distinct approaches towards the concept of \emph{nilpotency}---see~\cite{BeBou} where this is explored in detail. The \emph{lower central series} is either chosen to consist of nested binary commutators of the form $[[X,X],X]$---this is the approach followed, for example, by Higgins in the context of $\Omega$-groups~\cite{Higgins}, and by Bruck in the context of loops~\cite{Bruck}---or, alternatively, its terms are higher Higgins commutators of the form $[X,X,X]$, as in~\cite{HVdL, Actions, Higgins}. For instance, the \emph{commutator-associator filtration} of Mostovoy~\cite{Mostovoy1, Mostovoy2} and his co-authors is of the latter type---and not of the former, as explained for instance in~\cite{M-PI-SH}. An example, in the context of Moufang loops, showing that the two approaches need not agree is given in~\cite{BeBou}. This naturally leads to the question, under which conditions on the surrounding category they \emph{do} agree.

Our aim in this paper is to provide an answer to that question: to prove that \emph{algebraic coherence}~\cite{acc} is a sufficient condition for this to happen. This is a fairly well-studied property satisfied by many semi-abelian categories, including all \emph{Orzech categories of interest}~\cite{Orzech}, excluding loops and non-associative rings. In particular, the categories of groups, (commutative) rings (not necessarily unitary), Lie algebras over a commutative ring with unit, Poisson algebras and associative algebras are all examples, as are all varieties of such algebras, and crossed modules over those. Before going into further details, let us briefly describe our general strategy towards this result. 

In Categorical Algebra, several approaches to commutator theory exist. One reason Higgins commutators are relevant is, because they make us better understand the relationship between these different approaches. For instance~\cite{HVdL}, the compatibility between Smith commutators and Huq commutators (the so-called \emph{Smith is Huq} condition \SH\ of~\cite{MFVdL}, see also~\cite{Borceux-Bourn}) can be expressed as the condition that the commutator inequality
\[
[K,L,X]\leq [[K,L],X]\join [K,L]\leq X
\]
holds for all $K$, $L\normal X$.

In the case of groups, the validity of this commutator inequality may be viewed as a consequence of the classical \emph{Three Subgroups Lemma}, which says that 
\[
[[L,M],K]\leq [[K,L],M]\join [[M,K],L]\leq X
\]
whenever $K$, $L$ and $M$ are normal subgroups of a group $X$. Since it can be shown that in the category of groups, the ternary commutator $[K,L,M]$ of $K$, $L$, $M\normal X$ decomposes as 
 \[
 [[K,L],M]\join[[L,M],K]\join [[M,K],L],
 \]
the \emph{Three Subgroups Lemma} implies
\begin{align*}
[K,L,X]&= [[K,L],X]\join[[L,X],K]\join [[X,K],L]\\
&\leq [[K,L],X]\join [[X,K],L]\\
&\leq [[K,L],X]\join [K,L],
\end{align*}
because, by Proposition~\ref{Normality via commutator} below, $[X,K]\leq K$ since $K$ is normal in $X$.

With Theorem~7.1 in the article~\cite{acc}, this is made categorical as follows:
\begin{theorem}[The Three Subobjects Lemma]\label{Three Subobjects Lemma}
If $K$, $L$ and $M$ are normal subobjects of an object $X$ in an algebraically coherent semi-abelian category, then
\[
[K,L,M]= [[K,L],M]\join [[M,K],L]
\]
as subobjects of $X$. In particular, $[[L,M],K]\leq [[K,L],M]\join [[M,K],L]$.\noproof
\end{theorem}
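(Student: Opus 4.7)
The plan is to establish both inequalities of the stated equality by working at the level of the underlying co-smash products. Recall that the ternary Higgins commutator $[A,B,C]$ is defined as the regular image in $X$ of the canonical comparison morphism $A \cosmash B \cosmash C \to X$ from the ternary co-smash product, while the iterated binary commutator $[[A,B],C]$ is the image of the corresponding morphism $[A,B] \cosmash C \to X$.

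\textbf{Easy inclusion $[[K,L],M] \join [[M,K],L] \leq [K,L,M]$.} By the symmetry of the Higgins commutator in all its arguments, it is enough to establish that $[[A,B],C] \leq [A,B,C]$ for any three normal subobjects $A$, $B$, $C$ of $X$. I would do this by exhibiting a factorisation of $[A,B] \cosmash C \to X$ through $A \cosmash B \cosmash C \to X$. Such a factorisation is produced by naturality of the co-smash product combined with the defining regular epimorphism $A \cosmash B \twoheadrightarrow [A,B]$: functoriality of $({-}) \cosmash C$ on regular epimorphisms (valid in a semi-abelian category) yields a regular epimorphism $A \cosmash B \cosmash C \twoheadrightarrow [A,B] \cosmash C$ compatible with the maps into $X$, so that taking regular images gives the desired inclusion. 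Applied once to $(A,B,C) = (K,L,M)$ and once to $(A,B,C) = (M,K,L)$, together with the symmetry $[M,K,L]=[K,L,M]$, this yields the easy direction.

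\textbf{Hard inclusion $[K,L,M] \leq [[K,L],M] \join [[M,K],L]$.} This is where algebraic coherence is essential, and it corresponds to the classical Three Subgroups Lemma of group theory. The strategy is to analyse the ternary co-smash product $K \cosmash L \cosmash M$ and its image in $X$, decomposing it as the join of the images of the two iterated binary co-smash products $(K \cosmash L) \cosmash M$ and $(K \cosmash M) \cosmash L$. The key input from algebraic coherence is that the change-of-base functors of the fibration of points preserve finite joins of normal subobjects; equivalently, the kernel functor associated with a split epimorphism sends joins to joins. Applied to a carefully chosen split epimorphism built from the canonical map $K + L + M \to X$ and the normality of $K$, $L$, $M$ in $X$, this coherence condition produces a join decomposition at the level of co-smash products. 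Taking regular images in $X$, and invoking that $[K,L]$ and $[M,K]$ are themselves normal in $X$ in an algebraically coherent semi-abelian category, gives the inclusion.

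The principal obstacle lies in this second inclusion: one must identify the correct split epimorphism and the correct normal subobjects so that algebraic coherence produces a decomposition assembling \emph{precisely} into the two iterated binary commutators $[[K,L],M]$ and $[[M,K],L]$, rather than into some strictly larger join in which these only occur as summands. The ``in particular'' clause $[[L,M],K] \leq [[K,L],M] \join [[M,K],L]$---the classical \emph{Three Subgroups Lemma}---then follows immediately from the equality, by the inclusion $[[L,M],K] \leq [L,M,K]$ (the easy direction applied to the permutation $(L,M,K)$) combined with the symmetry $[L,M,K] = [K,L,M]$.
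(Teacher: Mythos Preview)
Your proposal correctly isolates the two inclusions and recognises that the second is the substantive one, but it is not a proof of the hard direction---as you yourself concede when you describe identifying the right split epimorphism as ``the principal obstacle''. There are two concrete issues.

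First, a minor slip in the easy direction: you conflate the ternary co-smash $A\cosmash B\cosmash C$ with the iterated binary $(A\cosmash B)\cosmash C$. These are different objects, and functoriality of $(-)\cosmash C$ applied to $A\cosmash B\twoheadrightarrow[A,B]$ produces a map out of the latter, not the former. The inclusion $[[A,B],C]\leq[A,B,C]$ is nonetheless standard (Proposition~\ref{Higgins properties}(4)); it rests on a comparison $(A\cosmash B)\cosmash C\to A\cosmash B\cosmash C$, so the conclusion survives.

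Second, and this is the real gap, the hard inclusion requires machinery you do not invoke. The argument in~\cite{acc} (of which the proof of Theorem~\ref{Theorem n Subobjects Lemma} is the higher-order analogue) runs as follows. One identifies $K\cosmash L\cosmash M$ as the second cross-effect of $K\cosmash(-)$, and uses algebraic coherence in the concrete form of Proposition~\ref{Proposition AC}(iii) to produce a regular epimorphism from $(K\flat L)\cosmash(K\flat M)$ onto an intermediate object covering $K\cosmash L\cosmash M$. Protomodularity gives that each $K\flat N$ is covered by $N+(K\cosmash N)$, and the decomposition~\eqref{Decomposition} for $(U+V)\cosmash W$ then expands everything into a sum of co-smash terms. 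Crucially, this expansion produces \emph{extra} terms beyond $(K\cosmash L)\cosmash M$ and $(K\cosmash M)\cosmash L$: their images in $X$ include commutators such as $[[K,L],[K,M]]$ and $[L,M,[K,L]]$. Absorbing these into $[[K,L],M]\vee[[M,K],L]$ is not automatic; it requires the inequality $[A,B,X]\leq[A,B]$ for normal $A$, $B\normal X$---the condition \SH+\NH, itself a nontrivial consequence of algebraic coherence. Your sketch mentions neither the $\flat$-decomposition, nor the appearance of surplus terms, nor the role of \SH+\NH\ in eliminating them, and without these ingredients the argument cannot close.
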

As an immediate consequence, we see that in an \emph{algebraically coherent semi-abelian} category, the equality $[X,X,X]=[[X,X],X]$ holds for any object $X$. This naturally leads to the main questions of our article: \emph{What about $n$-fold commutators for $n\geq 3$?} In particular, 
\begin{quote}
\emph{Is there an ``$n$ Subobjects Lemma'' of which the Three Subobjects Lemma is a special case?} 	
\end{quote}
Our aim is to give an affirmative answer to this question, under the same conditions. (In the article~\cite{BeBou}, Berger and Bourn show that $[X,X,X]=[[X,X],X]$ in the weaker setting of \emph{algebraically distributive} categories; we do not know how to extend their proof to higher-order commutators.) We obtain Theorem~\ref{Theorem n Subobjects Lemma}, which says that given $n\geq 3$ and any choice of normal subobjects $K_1$, \dots, $K_n$ of an object $X$ in an algebraically coherent semi-abelian category, the commutator $[K_1, \dots,K_n]$ decomposes as a join 
\begin{multline*}
[K_1, \dots,K_n]=[[K_1,K_2],K_3, \dots,K_n]\vee [K_2,[K_1,K_3],K_4, \dots,K_n]\vee\cdots\\
\cdots\vee [K_2,K_3, \dots,K_{n-1},[K_1,K_n]].	
\end{multline*}

It is clear that, via an induction argument, this solves the ``nilpotency problem''. Indeed, the first statement of Theorem~6.24 in~\cite{BeBou} states the following, of which we are not going to analyse all the details here. Our Corollary~\ref{Corollary Nilpotency} says that condition (iv) holds in any algebraically coherent semi-abelian category. So, in this context also (i)--(iii) hold. 
\begin{theorem}
Let $\CC$ be a semi-abelian category. The following conditions are equivalent:
\begin{tfae}
\item the nilpotency tower of $\CC$ is homogeneous;
\item for each $n$, the $n$\textsuperscript{th} Birkhoff reflection $I_n\colon \CC \to \Nil_n(\CC)$ is of degree $\leq n$;
\item for each $n$, an object of $\CC$ is $n$-nilpotent if and only if it is $n$-folded;
\item for each object $X$ of $\CC$, iterated Huq commutator $[\cdots[[X,X],X], \dots, X]$ and
Higgins commutator $[X,X, \dots,X]$ of same length coincide.	\noproof
\end{tfae}
\end{theorem}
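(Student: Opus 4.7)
The plan is to prove the equivalences via the cyclic chain (iv) $\To$ (iii) $\To$ (ii) $\iff$ (i) $\To$ (iv), working mostly at the level of commutator identities and invoking the polynomial-functor language only for the steps involving (i) and (ii).

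The easiest step is (iv) $\To$ (iii): both $n$-nilpotency and $n$-foldedness are, by definition, the vanishing of a specific commutator of length $n+1$ applied to $X$ (iterated Huq in the former case, Higgins in the latter), so the coincidence asserted in (iv) equates the two notions.

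For (iii) $\To$ (iv), I would form the quotient $Q = X/[\cdots[[X,X],X], \dots, X]$, where the denominator is the iterated Huq commutator of length $n+1$. By construction $Q$ is $n$-nilpotent, so (iii) says it is $n$-folded, and hence the Higgins commutator $[Q, \dots, Q]$ of length $n+1$ vanishes. Lifting this back to $X$ yields the containment $[X, \dots, X]\leq [\cdots[[X,X],X], \dots, X]$. The reverse containment is the standard inequality saying that nested binary Higgins commutators lie inside the $n$-ary Higgins commutator of the same length, combined with the fact that in any semi-abelian category the Huq commutator is dominated by the Higgins commutator.

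The steps (iii) $\To$ (ii), (ii) $\To$ (iv) and (ii) $\iff$ (i) require the language of polynomial functors in the sense of Hartl--Vespa. The degree of $I_n$ being $\leq n$ says that the $(n+1)$-st cross-effect of $I_n$ vanishes, which in semi-abelian terms means that $I_n$ kills the Higgins commutator of length $n+1$. Combined with the identification of $\ker(X\to I_n X)$ as the iterated Huq commutator, this yields (ii) $\To$ (iv). Conversely, (iii) allows $I_n X$ to be rewritten as $X/[X, \dots, X]$, whose higher cross-effects vanish by construction, giving (iii) $\To$ (ii). Finally, ``homogeneity'' of the nilpotency tower should translate, via a formal argument, into each $I_n$ having the correct polynomial degree, yielding (i) $\iff$ (ii).

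The main obstacle I anticipate is the equivalence (i) $\iff$ (ii), which depends entirely on the Berger--Bourn definitions of ``homogeneous nilpotency tower'' and the precise categorical framework of polynomial functors on a semi-abelian category. This bridge between the object-level commutator conditions (iii)--(iv) and the functorial conditions (i)--(ii) is the technical heart of \cite{BeBou} and is not touched by the earlier material of the excerpt, so establishing it requires work outside what the Three Subobjects Lemma and its higher-order generalisation provide.
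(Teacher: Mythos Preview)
The paper does not prove this theorem at all: it is quoted verbatim as ``the first statement of Theorem~6.24 in~\cite{BeBou}'', carries a \texttt{\textbackslash noproof} marker, and the authors explicitly say they ``are not going to analyse all the details here''. The theorem is invoked only to explain why their Corollary~\ref{Corollary Nilpotency} (condition (iv) in algebraically coherent semi-abelian categories) yields the remaining conditions (i)--(iii). So there is no proof in the paper to compare your proposal against; you are effectively sketching a proof of a result that the authors import wholesale from Berger--Bourn.

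That said, a couple of remarks on your sketch itself. Your (iv) $\Rightarrow$ (iii) and (iii) $\Rightarrow$ (iv) arguments are sound in outline, but the phrase ``the Huq commutator is dominated by the Higgins commutator'' is backwards in general (Higgins $\leq$ Huq, since Huq is the normal closure of Higgins). What actually makes the reverse containment work here is that when all entries equal $X$, the iterated Higgins commutators are already normal in $X$ (via Proposition~\ref{Higgins properties}(4),(5) and Proposition~\ref{Normality via commutator}), so the iterated Huq and iterated Higgins commutators of $X$ with itself coincide; then Proposition~\ref{Higgins properties}(4) gives the needed inclusion into the $n$-ary Higgins commutator. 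You correctly identify that (i) $\Leftrightarrow$ (ii) and the bridge to (iii)--(iv) depend on the Berger--Bourn machinery of polynomial functors and the precise meaning of ``homogeneous nilpotency tower'', none of which is developed in this paper---which is exactly why the authors defer to~\cite{BeBou} rather than reproving it.
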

Theorem~\ref{Theorem n Subobjects Lemma} will help answering other open questions in Categorical Algebra as well. It leads, for instance, to Theorem~\ref{Theorem Decomposition into binary commutators}, which says that given $n\geq 3$ and any choice of normal subobjects $K_1$, \dots, $K_n$ of an object $X$ in an algebraically coherent semi-abelian category, the commutator $[K_1, \dots,K_n]$ decomposes as a join of binary commutators
\begin{equation*}
\bigvee_{\sigma\in S_n}[\cdots[[K_{\sigma(1)},K_{\sigma(2)}],K_{\sigma(3)}], \dots ,K_{\sigma(n)}]\leq X.
\end{equation*}
Current work-in-progress such as an interpretation of cohomology with non-trivial coefficients generalising~\cite{RVdL2} depends on this, as does a categorical version of the results of~\cite{MutluPorter1, MutluPorter2}. Let us now focus on the missing details in the statement of our theorem.

\subsection{Definition of the commutator}\label{Definition Commutator}
We start with the binary case, which was first treated in~\cite{MM-NC}. For the sake of simplicity, we work in a semi-abelian category $\CC$. Consider a cospan $(k\colon K\to X, l\colon L\to X)$. The \defn{Higgins commutator} $[K,L]\leq X$ is computed as in the commutative diagram
\[
\xymatrix{
 0 \ar[r] & K \cosmash L \ar@{{ |>}->}[r]^{\iota_{K,L}} \ar@{-{ >>}}[d] & K + L \ar@{-{ >>}}[r]^{r_{K,L}} \ar[d]^-{\mus{k}{l}} 
 & K \times L \ar[r] & 0 \\
 & [K,L] \ar@{ >->}[r] & X
}
\]
where $r_{K,L}=\bigl\lgroup\begin{smallmatrix}
	1_K & 0 \\ 0 & 1_L
\end{smallmatrix}\bigr \rgroup$ is the canonical morphism from the coproduct to the product, $\iota_{K,L}$ is its kernel and $[K,L]$ is the image of the composite $\mus{k}{l}\comp \iota_{K,L}$. The object $K\cosmash L$ is called the \defn{co-smash product}~\cite{Smash} of $K$ and $L$. Of key importance for us is the following result due to Mantovani and Metere, Theorem~6.3 in~\cite{MM-NC}:
\begin{proposition}\label{Normality via commutator}
In a semi-abelian category, a subobject $K\leq X$ is normal (we write $K\normal L$) if and only if $[K,X]\leq K$. \noproof
\end{proposition}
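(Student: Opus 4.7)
The plan is to treat the two implications separately; the forward direction is essentially a diagram chase, whereas the reverse direction carries the genuinely semi-abelian content of the statement.

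For the forward implication, suppose $K = \Ker(q)$ for some $q \colon X \to Q$. I would first compute $q \comp \mus{k}{1_X} = \mus{0}{q}$, and then observe that $\mus{0}{1_X} \colon K+X \to X$ equals $p_X \comp r_{K,X}$, where $p_X \colon K \times X \to X$ is the projection. Hence $q \comp \mus{k}{1_X} = q \comp p_X \comp r_{K,X}$, and precomposing with $\iota_{K,X}$, the kernel of $r_{K,X}$, gives zero. Thus $q$ annihilates the image $[K,X]$ of $\mus{k}{1_X} \comp \iota_{K,X}$, so $[K,X] \leq \Ker(q) = K$.

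For the reverse implication, suppose $[K,X] \leq K$. My plan is to identify the normal closure of $K$ in $X$---that is, $\Ker(\coker(k))$---with the join $K \join [K,X]$. Granting this identification, the hypothesis forces the normal closure to equal $K$ itself, so $K$ is normal. To establish the identity, I would use that $\mus{k}{1_X} \colon K+X \to X$ is a regular epimorphism (since $1_X$ factors through it), and run a diagram chase built on the short exact sequence
\[
0 \to K \cosmash X \xrightarrow{\iota_{K,X}} K+X \xrightarrow{r_{K,X}} K \times X \to 0
\]
together with the $3 \times 3$-lemma, so as to express $\Ker(\coker(k))$ as the regular image of the combined morphism $(K \cosmash X) + K \to X$, which is precisely $[K,X] \join K$. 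An alternative, action-theoretic route is to interpret the factorisation $K \cosmash X \to K$ supplied by the hypothesis as the restriction of the conjugation action of $X$ on itself, extend it to an internal $X$-action on $K$, and appeal to the Bourn--Janelidze correspondence between normal monomorphisms and internal actions.

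The main obstacle is this reverse implication. Both the join-identity and the verification of the internal-action axioms for the induced morphism $K \cosmash X \to K$ rely essentially on the full semi-abelian package: Bourn-protomodularity provides the Split Short Five Lemma and the Noether-type isomorphism theorems relating kernels, cokernels and joins, while Barr-exactness is needed to control regular images. In the absence of these features the converse can genuinely fail, so the proof cannot be purely formal.
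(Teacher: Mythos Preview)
The paper does not supply its own proof of this proposition: it is recorded with a \verb|\noproof| marker and attributed to Mantovani and Metere~\cite[Theorem~6.3]{MM-NC}. So there is no in-paper argument to compare against, only the cited source.

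Your forward implication is correct and is the standard diagram chase. For the converse, both routes you outline are viable and appear in the literature. The join formula $\overline{K}=K\vee[K,X]$ for the normal closure is exactly the content of~\cite[Proposition~4.14]{Actions}, which the present paper quotes immediately after the proposition; just be careful, when writing this out, that the proof of that formula does not itself invoke the proposition you are proving. In~\cite{Actions} circularity is avoided by describing the normal closure independently as the image of $\mus{k}{1_X}\comp\kappa_{X,K}\colon X\flat K\to X$, and then decomposing $X\flat K$ as $K$ joined with $K\cosmash X$. Your action-theoretic alternative is actually closer to the original Mantovani--Metere argument: they show that a factorisation $K\cosmash X\to K$ of the commutator map yields an internal $X$-action on $K$ whose associated split extension has $k$ as its kernel. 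Either way, your diagnosis that Barr-exactness and protomodularity carry the real weight of the converse is accurate.
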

It is crucial here that the category is Barr-exact; this actually one of the reasons our results are formulated in the context of a semi-abelian category, rather than a merely homological one. Proposition~4.14 in~\cite{Actions} adds to Proposition~\ref{Normality via commutator} that the normal closure of $K\leq X$ may be obtained as the join $K\join [K,X]\normal X$. The \defn{Huq commutator}~\cite{Borceux-Bourn} is the normal closure of $[K,L]$ in~$X$; by the above, it is the join $[K,L]\join [[K,L],X]$. 

If the category $\CC$ is such that $[K,L]\normal X$ whenever $K$, $L\normal X$, then we say that~$\CC$ satisfies \defn{normality of Higgins commutators} (or condition \defn{(NH)} for short), see~\cite{CGrayVdL1}. All \emph{Orzech categories of interest}~\cite{Orzech} satisfy \NH. On the other hand, the category of (commutative) loops does not: as explained to us by Alan Cigoli, it is not hard to construct an explicit counterexample. In the case of algebras, the condition can be characterised more precisely as follows~\cite[Theorem 2.12]{GM-VdL2}, via a kind of weak associativity condition. Here $\KK$ is a field, and~$\Alg_{\KK}$ is the variety of \defn{non-associative algebras} over $\KK$, where an object is a $\KK$-vector space $V$ equipped with a bilinear operation $\cdot\colon V\times V\to V$ (which is not necessarily associative). In this context, a normal subobject $I\normal X$ is an ideal, and the commutator $[I,I]$ is $I^2$. 

\begin{theorem}\label{Theorem Algebras}
Let $\KK$ be an infinite field, and $\VV$ a subvariety of~$\Alg_\KK$. The following conditions are equivalent:
\begin{tfae}
	\item $\VV$ satisfies \NH;
\item there exist $\lambda_{1}$, \dots, $\lambda_{16}$ in $\KK$ such that
\begin{align*}
z(xy)=
\lambda_{1}y(zx)&+\lambda_{2}x(yz)+
\lambda_{3}y(xz)+\lambda_{4}x(zy)\\
&+\lambda_{5}(zx)y+\lambda_{6}(yz)x+
\lambda_{7}(xz)y+\lambda_{8}(zy)x
\end{align*}
and 
\begin{align*}
	(xy)z=
	\lambda_{9}y(zx)&+\lambda_{10}x(yz)+
	\lambda_{11}y(xz)+\lambda_{12}x(zy)\\
	&+\lambda_{13}(zx)y+\lambda_{14}(yz)x+
	\lambda_{15}(xz)y+\lambda_{16}(zy)x
\end{align*}
are identities in $\VV$;
\item $\VV$ is an \emph{Orzech category of interest}~\cite{Orzech}.\noproof
\end{tfae}
\end{theorem}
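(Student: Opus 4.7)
The plan is to establish the equivalence by proving (ii) $\Rightarrow$ (i) $\Rightarrow$ (ii) and (ii) $\Leftrightarrow$ (iii). The starting point is a concrete description of the Higgins commutator in $\Alg_\KK$: for ideals $I, J \normal X$, the subobject $[I, J]\leq X$ is the $\KK$-linear subspace spanned by all products $ij$ and $ji$ with $i\in I$ and $j\in J$. This follows from unwinding Definition~\ref{Definition Commutator}, since in $\Alg_\KK$ the co-smash product $K\cosmash L$ is generated by the formal one-sided products $k\cdot l$ and $l\cdot k$. By Proposition~\ref{Normality via commutator}, condition \NH\ then amounts to saying that for all ideals $I, J\normal X$ and all $z\in X$, $i\in I$, $j\in J$, each of the four elements $z(ij)$, $(ij)z$, $z(ji)$, $(ji)z$ lies in $[I,J]$.

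For (ii) $\Rightarrow$ (i): substitute $x\mapsto i\in I$, $y\mapsto j\in J$, $z\mapsto z\in X$ in both identities. Each monomial on the right-hand side has exactly one factor in $I$ (those built from $x$, possibly further multiplied by $z$, which preserves membership in~$I$) and one in $J$ (similarly from $y$), so every term lies in $[I,J]$. This gives $z(ij), (ij)z\in[I,J]$. Substituting $x\mapsto j\in J$, $y\mapsto i\in I$ takes care of $z(ji)$ and $(ji)z$ symmetrically, so $[I,J]$ is an ideal.

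The converse (i) $\Rightarrow$ (ii) is the main content. I would work in the free $\VV$-algebra $F$ on three generators $x, y, z$ and take $I=(x)$, $J=(y)$. Since $xy\in[I,J]$, assumption \NH\ forces $z(xy)\in[I,J]$, so one can write $z(xy)=\sum_{k}\alpha_k\,i_kj_k+\sum_{l}\beta_l\,j_li_l$ with $i_k, i_l\in I$ and $j_k, j_l\in J$. The key step is to isolate the multihomogeneous component of tridegree $(1,1,1)$ in $(x,y,z)$: because $\KK$ is infinite, substituting $x\mapsto \alpha x$, $y\mapsto \beta y$, $z\mapsto \gamma z$ and extracting the coefficient of $\alpha\beta\gamma$ performs this projection on any identity of $\VV$. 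The left-hand side $z(xy)$ already has tridegree $(1,1,1)$. On the right, requiring $i_kj_k$ to be multilinear in $(x,y,z)$ with $i_k\in(x)$ containing $x$ and $j_k\in(y)$ containing $y$ forces $(i_k,j_k)\in\bigl\{(x,yz),(x,zy),(xz,y),(zx,y)\bigr\}$, and similarly $(j_l,i_l)\in\bigl\{(y,xz),(y,zx),(yz,x),(zy,x)\bigr\}$, yielding exactly the eight monomials displayed in the first identity of (ii). This gives the coefficients $\lambda_1,\dots,\lambda_8\in\KK$; the same argument applied to $(xy)z\in[I,J]$ gives $\lambda_9,\dots,\lambda_{16}$. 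The resulting equations hold throughout $\VV$ because they are identities of the free object.

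Finally, the equivalence (ii) $\Leftrightarrow$ (iii) passes through Orzech's axiomatic presentation~\cite{Orzech}: an \emph{Orzech category of interest} is defined, in the setting of a subvariety of $\Alg_\KK$, by the existence of precisely this sort of expansion identity for the two ternary expressions $z(xy)$ and $(xy)z$ in terms of the eight structurally admissible products, with the remaining axioms automatic in the algebra setting. Hence verifying (ii) $\Leftrightarrow$ (iii) amounts to matching Orzech's identity list with the list in (ii). The main obstacle I expect is the multihomogeneous decomposition step in (i) $\Rightarrow$ (ii): while standard, it relies crucially on the infiniteness of $\KK$ to multilinearize, and on a careful case analysis ensuring that no multilinear $(1,1,1)$-monomial outside the prescribed eight can arise among products with left factor in $(x)$ and right factor in $(y)$, or vice versa.
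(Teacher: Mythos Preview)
The paper does not actually prove this theorem: it is quoted from~\cite[Theorem~2.12]{GM-VdL2} and stated without proof, so there is no argument in the present paper to compare yours against.

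On its own merits your sketch is essentially correct, and the key step for (i)~$\Rightarrow$~(ii)---multilinearising in the free $\VV$-algebra on three generators, which is where the infiniteness of $\KK$ enters---is the right idea. One imprecision: the co-smash product $I\cosmash J$ in $\Alg_\KK$ is \emph{not} spanned as a $\KK$-vector space by the formal products $i\cdot j$ and $j\cdot i$ alone; it contains all mixed non-associative monomials of arbitrary length. What is true is that, after taking the image in $X$ and using that $I$ and $J$ are ideals, every mixed monomial evaluates into $IJ+JI$ (a short induction on monomial length: write $m=m_1\cdot m_2$ and observe that any subfactor containing an $I$-leaf lies in $I$, and symmetrically for $J$), so your formula $[I,J]=IJ+JI$ survives even though the stated justification does not quite. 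For (ii)~$\Leftrightarrow$~(iii) your description is accurate in spirit but thin: Orzech's axioms are formulated for general varieties of $\Omega$-groups, and specialising them to a subvariety of $\Alg_\KK$---where the only non-group binary operation is the algebra multiplication---does collapse the crucial axiom to the two eight-term identities of~(ii), but this reduction deserves to be written out rather than asserted.
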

Note that associativity, or the Jacobi identity, are conditions as in (ii).

\subsection{Higher-order commutators}\label{Higher order}
We recall the definitions of~\cite{Smash, HVdL, Actions}. Take $n\geq 2$ and consider a collection of arrows $(k_i\colon {K_i\to X})_{1\leq i\leq n}$. The \defn{co-smash product} $K_1\cosmash\cdots\cosmash K_n$ of $K_1$, \dots, $K_n$ is the kernel of the arrow
\[
r_{K_1, \dots,K_n}\coloneq \musvv{\widehat\pi_1}{\widehat\pi_n}\colon K_1+\cdots+K_n\to \prod_{k=1}^n(K_1+\cdots+\widehat K_k+\cdots+K_n),
\]
where
\[
\widehat\pi_k\colon \coprod_{i=1}^nK_i\to \coprod_{\substack{i=1\\i\neq k}}^nK_i
\]
is such that $\widehat\pi_k\comp \iota_{K_i}$ is $\iota_{K_i}$ whenever $k\neq i$, and zero otherwise. The \defn{Higgins commutator} $[K_1,\dots,K_n]\leq X$ is computed as in the commutative diagram
\[
\xymatrix@C=3em{
 0 \ar[r] & K_1\cosmash\cdots\cosmash K_n \ar@{{ |>}->}[r]^-{\iota_{K_1,\dots,K_n}} \ar@{-{ >>}}[d] & \coprod_{i=1}^nK_i \ar[r]^-{r_{K_1,\dots,K_n}} \ar[d]^-{\lgroup k_1 \; \cdots \;k_n \rgroup} 
 & \prod_{k=1}^n\coprod_{\substack{i=1\\i\neq k}}^nK_i \\
 & [K_1,\dots,K_n] \ar@{ >->}[r] & X;
}
\]
it is the image of the composite $\lgroup k_1 \; \cdots \;k_n \rgroup\comp \iota_{K_1,\dots,K_n}$. 

In the category of groups, for $K$, $L$, $M\leq X$, a typical element of $[K,L,M]$ is of the form
\[
klk^{-1}l^{-1}mlkl^{-1}k^{-1}m^{-1},
\]
where $k\in K$, $l\in L$ and $m\in M$. In the category of loops, $k(lm)/(kl)m$ belongs to the commutator $[K,L,M]$, and in the variety $\Alg_\KK$, so does $(kl)m$.

Higgins commutators have excellent stability properties. Here we give a summary of those we shall need later on.

\begin{proposition}\cite{HVdL, Actions}\label{Higgins properties}
Suppose $K_i$, $L_i\leq X$ for $1\leq i\leq n$. Then we have the following (in)equalities of subobjects:
\begin{enumerate}
\item[(0)] if $K_i=0$ then $[K_1,\dots, K_i,\dots , K_n]=0$;
\item for $\sigma\in S_n$, $[K_1,\dots,K_n ]=[K_{\sigma(1)},\dots,K_{\sigma(n)}]$;
\item $f[K_1,\dots,K_n ]=[f(K_1),\dots,f(K_n)]\leq Y$ for $f\colon {X\to Y}$ any regular epi;
\item $[K_1,\dots,K_{i-1},L_i,K_{i+1},\dots ,K_n ]\leq [K_1,\dots,K_{i-1},K_i,K_{i+1},\dots,K_n]$ when $L_i\leq K_i$;
\item $[[K_1,\dots,K_{i}],K_{i+1},\dots ,K_n ]\leq [K_1,\dots,K_i,K_{i+1},\dots,K_n]$;
\item $[K_1,\dots,K_{i-1},K_i,K_{i+1},\dots ,K_n ]\leq [K_1,\dots,K_{i-1},K_{i+1},\dots,K_n]$ whenever $K_i=K_{i+1}$;
\item $[K_1,\dots,K_{n-1},K_n\vee L_n]=[K_1,\dots,K_{n-1},K_n]\vee [K_1,\dots,K_{n-1},L_n]\vee [K_1,\dots,K_{n-1},K_n,L_n]$.\noproof
\end{enumerate}
\end{proposition}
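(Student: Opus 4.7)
The plan is to derive all seven items from the defining diagram of the cosmash product---with $K_1\cosmash\cdots\cosmash K_n$ the kernel of the canonical comparison map $r_{K_1,\dots,K_n}\colon \coprod_i K_i\to \prod_k \coprod_{i\neq k}K_i$---together with the fact that $[K_1,\dots,K_n]$ is the regular image of this kernel under $\lgroup k_1\;\cdots\; k_n\rgroup$. All three constructions (coproduct, product, image) are functorial in the indexed family $(k_i\colon K_i\to X)$, and this naturality will do most of the work.

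For (0), if some $K_i=0$ then the $i$-th summand drops and the projection $\widehat\pi_i$ becomes an isomorphism, so $r_{K_1,\dots,K_n}$ is a split monomorphism and its kernel vanishes. For (1), the symmetric group acts compatibly on source and target of $r_{K_1,\dots,K_n}$, inducing canonical automorphisms of the cosmash product that make the commutators coincide. For (2), in a regular category the direct image along a regular epimorphism commutes with finite coproducts and with image factorisations, so applying $f$ to the defining diagram identifies $f[K_1,\dots,K_n]$ with $[f(K_1),\dots,f(K_n)]$. Parts (3), (4) and (5) each follow by mapping an auxiliary comparison diagram into the master one: the inclusion $L_i\hookrightarrow K_i$ in (3), the composite $[K_1,\dots,K_i]\hookrightarrow X$ replacing $\lgroup k_1\;\cdots\;k_i\rgroup$ in (4), and the codiagonal $K_i+K_i\to K_i$ that lowers the arity in (5). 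Taking kernels yields a map of cosmash products, and then images in~$X$ give the stated inequality.

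The principal obstacle is item (6), the join decomposition, which generalises the binary case treated in~\cite{MM-NC}. The strategy is to analyse the effect of replacing the last entry $K_n$ by $K_n\vee L_n$ in the comparison diagram. Since $K_n\vee L_n$ is the regular image of $\lgroup k_n\;l_n\rgroup\colon K_n+L_n\to X$, and the cosmash product preserves regular epimorphisms entry-wise, the image of $K_1\cosmash\cdots\cosmash K_{n-1}\cosmash(K_n\vee L_n)$ in~$X$ coincides with that of $K_1\cosmash\cdots\cosmash K_{n-1}\cosmash(K_n+L_n)$. A direct analysis of the comparison map for the latter, using the universal property of the coproduct $K_n+L_n$ together with protomodularity and Barr-exactness to compute joins as regular images, then produces three subobjects whose join exhausts that image: these are the claimed commutators $[K_1,\dots,K_{n-1},K_n]$, $[K_1,\dots,K_{n-1},L_n]$, and $[K_1,\dots,K_{n-1},K_n,L_n]$. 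The reverse inclusion---that each of these lies in $[K_1,\dots,K_{n-1},K_n\vee L_n]$---is immediate from (3) and (4).
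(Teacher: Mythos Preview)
The paper does not prove this proposition: it is stated with the \verb|\noproof| marker and attributed to~\cite{HVdL, Actions}. So there is no ``paper's own proof'' to compare against; your sketch stands on its own and should be read against those references.

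Your outlines for (0)--(3) are essentially the arguments given in the literature and are fine as sketches. Two points deserve more care. First, in (2) you invoke that ``direct image along a regular epimorphism commutes with finite coproducts and with image factorisations''; what is actually needed is that the co-smash product preserves regular epimorphisms in each variable. This is \emph{not} a consequence of regularity alone: it uses that a regular epimorphism between split epimorphisms is a double extension (the Mal'tsev/protomodular ingredient), exactly as in Lemma~\ref{Lemma Regular Epimorphism of Split Cubes} and Lemma~\ref{Lemma D preserves extensions}. Second, your treatment of (4) is too compressed: one must exhibit a natural map $(K_1\cosmash\cdots\cosmash K_i)\cosmash K_{i+1}\cosmash\cdots\cosmash K_n\to K_1\cosmash\cdots\cosmash K_n$ compatible with the comparison maps, which is the content of~\cite[Proposition~2.21]{HVdL} rather than a one-line substitution.

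The real gap is in (6). You correctly reduce $[K_1,\dots,K_{n-1},K_n\vee L_n]$ to the image of $K_1\cosmash\cdots\cosmash K_{n-1}\cosmash(K_n+L_n)$ in $X$, but the sentence ``a direct analysis \dots\ then produces three subobjects whose join exhausts that image'' is precisely the non-trivial step. What is needed is the decomposition
\[
(U+V)\cosmash W \text{ is covered by } (U\cosmash W)+(V\cosmash W)+(U\cosmash V\cosmash W),
\]
which is~\cite[Lemma~2.12 together with Proposition~2.22]{HVdL} and relies on protomodularity in an essential way (it is exactly the fact the present paper later quotes as~\eqref{Decomposition}). Without this lemma your argument for (6) does not go through; with it, the rest of your sketch is correct.
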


One concrete application of a higher Higgins commutator is in the expression of the \emph{Smith is Huq} condition \SH\ mentioned above. Let $R$ and $S$ be equivalence relations on an object $X$, and let $K$ and $L\normal X$ denote their normalisations (=~zero-classes). Theorem~4.16 in~\cite{HVdL} says that the normalisation of the Smith/Pedicchio commutator~\cite{Pedicchio, Borceux-Bourn} of~$R$ and~$S$ is the normal subobject $[K,L]\join [K,L,X]$ of $X$. As a consequence, the condition \SH\ holds if and only if this join is a subobject of the Huq commutator $[K,L]\join [[K,L],X]$ of $K$ and $L$. Hence the category $\CC$ satisfies \emph{both} conditions \SH\ and \NH\ if and only if $[K,L,X]\leq [K,L]$ whenever $K$, $L\normal X$: this is Proposition~6.1 in~\cite{CGrayVdL1}. We now describe a convenient class of categories having this property, and (as it turns out) even satisfying a higher-order version of it.

\subsection{Algebraically coherent categories}\label{Fibration of points}
The category $\Pt(\CC)$ of \defn{points in $\CC$} has split epimorphisms with a chosen splitting, so pairs $(p\colon {Z\to X},s\colon {X\to Z})$ where $p\comp s =1_X$, as objects and natural transformations between those as morphisms. The fibre over an object $X$ of $\CC$ is written $\Pt_X(\CC)$; a morphism from $(p,s)$ to $(p'\colon {Z'\to X},s'\colon {X\to Z'})$ is a map $z\colon {Z\to Z'}$ in $\CC$ that satisfies $z\comp s=s'$ and $p'\comp z=p$.

Given any morphism $f\colon X'\to X$ we may pull back or push out along it, and thus obtain the change-of-base functors 
\[
f^*\colon \Pt_X(\CC)\to \Pt_{X'}(\CC)
\qquad\text{and}\qquad
f_*\colon \Pt_{X'}(\CC)\to \Pt_{X}(\CC),
\]
where $f_{*}\dashv f^*$.
Recall that Bourn-protomodularity is equivalent to the condition that the functors~$f^*$ are conservative. A semi-abelian category~$\CC$ is said to be \defn{algebraically coherent}~\cite{acc} when moreover the functors $f^*$ are \defn{coherent}, which means that they preserve finite limits and jointly extremally epimorphic pairs of arrows.

In the special case where $f=\gnab_X\colon{0\to X}$ for some $X$ in $\CC$, we find the functor
\[
\gnab_{X}^{*}\colon \Pt_X(\CC)\to \CC\colon (p,s)\mapsto \Ker(p)
\]
and its left adjoint 
\[
(\gnab_{X})_{*}\colon \CC\to \Pt_X(\CC)\colon Y \mapsto (\mus{0}{1_X}\colon Y+X\to X,\;\iota_X\colon X\to Y+X).
\]
Given an object $Y$, the process of first applying the left adjoint $(\gnab_{X})_{*}$ to it, then the right adjoint $\gnab_{X}^{*}$ to the result yields an object $X\flat Y$, which is the kernel in the short exact sequence
\[
\xymatrix{0 \ar[r] & X\flat Y \ar@{{ |>}->}[r]^{\kappa_{X,Y}} & Y+X \ar@{-{ >>}}@<.5ex>[r]^-{\mus{0}{1_X}} & X \ar@{{ >}->}@<.5ex>[l]^-{\iota_X} \ar[r] & 0.}
\]
The adjunction $(\gnab_{X})_{*}\dashv\gnab_{X}^*$ is monadic: the functor $X\flat-\colon \CC\to \CC$ is part of a monad, whose algebras are the \emph{internal actions} of $X$; via a semidirect product construction~\cite{Bourn-Janelidze:Semidirect, BJK}, the category of $X$-actions in $\CC$ is equivalent to $\Pt_X(\CC)$. Here we only need the monad's unit: the inclusion $\iota_Y\colon {Y\to Y+X}$ factors over the kernel $\kappa_{X,Y}$ as a split monomorphism $\eta_Y^X\colon {Y\to X\flat Y}$ with splitting $\tau^X_Y\coloneq \lgroup 1_Y\;0\rgroup\comp \kappa_{X,Y}$.

\begin{proposition}\cite{acc}\label{Proposition AC}
For a semi-abelian category $\CC$, the following conditions are equivalent:
\begin{tfae}
	\item $\CC$ is algebraically coherent;
	\item the change-of-base functors $\gnab_{X}^{*}\colon \Pt_X(\CC)\to \CC$ are coherent;
	\item the natural comparison morphism $\mus{X\flat \iota_Y}{X\flat \iota_Z}\colon X\flat Y+ X\flat Z \to X\flat (Y+Z)$ is a regular epimorphism, for each choice of $X$, $Y$, $Z\in \CC$.\noproof
\end{tfae}
\end{proposition}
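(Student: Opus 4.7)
The plan is to establish the two equivalences (i)~$\Leftrightarrow$~(ii) and (ii)~$\Leftrightarrow$~(iii). The implication (i)~$\Rightarrow$~(ii) is immediate, since the initial arrow $\gnab_X\colon 0\to X$ is a particular morphism of $\CC$. For (ii)~$\Rightarrow$~(i), I would factorise an arbitrary $f\colon X'\to X$ as $\gnab_X=f\comp \gnab_{X'}$, so that $\gnab_X^*\cong (\gnab_{X'})^*\comp f^*$. Because $(\gnab_{X'})^*$ is conservative by protomodularity and preserves finite limits as a right adjoint, it reflects jointly extremally epimorphic pairs; a standard reflection argument then transfers coherence of $\gnab_X^*$ back to an arbitrary $f^*$.

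For (ii)~$\Rightarrow$~(iii), the key observation is that the left adjoint $(\gnab_X)_*$ preserves coproducts, so the canonical inclusions from $(\gnab_X)_*(Y)$ and $(\gnab_X)_*(Z)$ into $(\gnab_X)_*(Y+Z)$ realise the coproduct in $\Pt_X(\CC)$. Coproduct injections are jointly extremally epimorphic in any semi-abelian category, so applying the coherent functor $\gnab_X^*$ gives a jointly extremally epimorphic pair $X\flat\iota_Y$, $X\flat\iota_Z$ into $X\flat(Y+Z)$. Since in a semi-abelian category extremal epimorphisms are regular, this is precisely the statement that the comparison morphism is a regular epimorphism. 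For the converse (iii)~$\Rightarrow$~(ii), I would exploit the monadicity of the adjunction $(\gnab_X)_*\dashv \gnab_X^*$: every point $(q,t)$ over $X$ arises as a regular quotient of a free point $(\gnab_X)_*(Y)$. Given a jointly extremally epimorphic pair $u\colon (p,s)\to (q,t)$ and $u'\colon (p',s')\to (q,t)$, I would cover $(p,s)$, $(p',s')$ and $(q,t)$ by free points, and reduce to the case handled by (iii), namely the coproduct injections of $(\gnab_X)_*(Y)$ and $(\gnab_X)_*(Z)$ into $(\gnab_X)_*(Y+Z)$. The regular epimorphism provided by (iii) then propagates through the cover via the (regular epi, mono) factorisation system in $\CC$, yielding the required joint extremal epimorphism of kernels.

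The main obstacle will be this last reduction step. One must ensure that a jointly extremally epimorphic pair in $\Pt_X(\CC)$ admits a cover by coproduct injections of free algebras in a way that is compatible with $\gnab_X^*$, and that the resulting diagram chase does produce the required joint epimorphism. Formulating the argument cleanly requires tracking regular images through both $\gnab_X^*$ and the monad $X\flat -$, and using that regular epimorphisms are preserved by both functors as well as stable under coproducts in $\CC$.
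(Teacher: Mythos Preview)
The paper does not prove this proposition: it is quoted from~\cite{acc} and closed with \verb|\noproof|, so there is no in-paper argument to compare against. That said, your outline matches the strategy used in~\cite{acc} rather closely.

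Your arguments for (i)~$\Rightarrow$~(ii), (ii)~$\Rightarrow$~(i) and (ii)~$\Rightarrow$~(iii) are correct as stated. In particular, the reflection step in (ii)~$\Rightarrow$~(i) is exactly right: a conservative, finite-limit-preserving functor reflects jointly extremally epimorphic cospans, so from $\gnab_X^*\cong(\gnab_{X'})^*\comp f^*$ one does recover coherence of an arbitrary $f^*$.

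For (iii)~$\Rightarrow$~(ii), your monadicity idea is the right one, but the sketch as written still hides the nontrivial point. What you actually need is that a jointly extremally epimorphic pair in $\Pt_X(\CC)$ is the same thing as a regular epimorphism out of a binary sum in $\Pt_X(\CC)$; then it suffices to check that $\gnab_X^*$ sends the two coprojections of \emph{any} binary sum in $\Pt_X(\CC)$ to a jointly extremally epimorphic pair in $\CC$. Covering each summand $(p,s)$ and $(p',s')$ by free points $(\gnab_X)_*(\Ker p)$ and $(\gnab_X)_*(\Ker p')$ via the counit (a regular epimorphism by monadicity) reduces this to the free case, where (iii) applies directly. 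The passage back down works because $\gnab_X^*$, being a right adjoint between regular categories in which split epimorphisms give double extensions, preserves the relevant regular epimorphisms on kernels. If you make this reduction explicit---rather than speaking of ``propagating through the cover''---the argument goes through without difficulty and is essentially the one given in~\cite{acc}.
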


All \emph{locally algebraically cartesian closed} semi-abelian categories~\cite{Gray2012} are examples, since then the comparison morphisms of condition (iii) are isomorphisms. We find groups, Lie algebras, crossed modules, cocommutative Hopf algebras over a field of characteristic zero. Next we have all \emph{Orzech categories of interest}. In the case of non-associative algebras, we find that algebraic coherence is equivalent to the conditions of Theorem~\ref{Theorem Algebras}.

All algebraically coherent semi-abelian categories satisfy both \SH\ and \NH. More precisely, we have the following:
\begin{proposition}
For $\CC$ semi-abelian, the following conditions are equivalent:
\begin{tfae}
\item the change-of-base functors $\gnab_{X}^{*}\colon \Pt_X(\CC)\to \CC$ preserve Huq commutators of pairs of normal subobjects;
\item $\CC$ satisfies \SH$+$\NH;
\item $[K,L,X]\leq [K,L]$ whenever $K$, $L\normal X$.
\end{tfae}
Furthermore, these conditions hold when $\CC$ is algebraically coherent.
\end{proposition}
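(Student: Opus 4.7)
The equivalence (ii)$\Leftrightarrow$(iii) has been recorded in the paragraph immediately preceding the proposition, as Proposition~6.1 of~\cite{CGrayVdL1}: the plan is simply to cite it. The remaining work is to link condition~(i) with the others, and to establish the final claim about algebraic coherence.

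For (iii)$\Rightarrow$(i), I would start from a point $(Z,p,s)$ in $\Pt_X(\CC)$ together with two normal subobjects $A$, $B$ of it in the fibre. The plan is to unpack the Huq commutator of $A$ and $B$ in $\Pt_X(\CC)$ as the normal closure of the Higgins commutator $[A,B]$ inside $(Z,p,s)$, viewed in $\CC$. Since $\gnab_X^*$ is given by kernel of $p$, it preserves kernels, regular images and binary joins of normal subobjects, hence (via the defining diagram of~\ref{Definition Commutator}) Higgins commutators of subobjects of~$Z$. Unwinding the normal-closure step in the fibre, where the $X$-action on $\Ker(p)$ intervenes, one obtains an expression of the form $[\gnab_X^*A,\gnab_X^*B]\vee [\gnab_X^*A,\gnab_X^*B,X]$ for the image of $[A,B]^{\mathrm{Huq}}_{\Pt_X(\CC)}$. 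Condition~(iii) is precisely what is needed to absorb the ternary term into $[\gnab_X^*A,\gnab_X^*B]$, producing the Huq commutator $[\gnab_X^*A,\gnab_X^*B]\vee[[\gnab_X^*A,\gnab_X^*B],X]$ computed in~$\CC$.

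For the converse (i)$\Rightarrow$(iii), the plan is to specialise the hypothesis to a cleverly chosen point over~$X$. Given $K,L\normal X$, I would work with a point obtained from the inclusions of $K$, $L$ and $1_X$ into a suitable coproduct, equipped with normal subobjects $A$ and $B$ whose $\gnab_X^*$-images are $K$ and $L$. Applying $\gnab_X^*$ to the fibre Huq commutator of $A$ and $B$, and running the analysis of the previous step from right to left, produces the ternary term $[K,L,X]$ on one side, which preservation then constrains by a subobject of $[K,L]$ on the other. I expect this to be the main obstacle: one needs to set up the right pair of normal subobjects $A$ and $B$ in the fibre and read off the exact form of their fibre Huq commutator --- normality in $\Pt_X(\CC)$ being more restrictive than in $\CC$ --- so that the resulting inequality is~(iii) itself and not only a Huq-closure thereof. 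Here a combination of Proposition~\ref{Normality via commutator} with item~(4) of Proposition~\ref{Higgins properties} should do the bookkeeping.

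For the last claim, if $\CC$ is algebraically coherent, then by Proposition~\ref{Proposition AC}(ii) the change-of-base functors $\gnab_X^*$ are coherent, so they preserve finite limits and jointly strongly epimorphic pairs; equivalently, by Proposition~\ref{Proposition AC}(iii), the coproduct comparison $\gnab_X^*A+\gnab_X^*B\twoheadrightarrow \gnab_X^*(A+B)$ is a regular epimorphism. These preservations suffice to transport the construction of the Higgins commutator (kernel of a coproduct-to-product comparison, followed by a regular image) through $\gnab_X^*$; one more application of the same argument to the normal-closure step then yields preservation of the Huq commutator of normal subobjects, which is~(i).
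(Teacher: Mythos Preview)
Your treatment of (ii)$\Leftrightarrow$(iii) and of the final ``algebraically coherent'' claim matches the paper's: both cite Proposition~6.1 of~\cite{CGrayVdL1} for the former, and both argue the latter by observing that a coherent $\gnab_X^*$ preserves Higgins commutators and joins, hence normal closures, hence Huq commutators. Where you diverge is in linking (i) with the rest. The paper does not attempt a direct proof of (i)$\Leftrightarrow$(iii); it simply cites Theorem~6.5 of~\cite{CGrayVdL1} for the equivalence (i)$\Leftrightarrow$(ii) and is done.

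Your attempted direct route has a real gap. In the step (iii)$\Rightarrow$(i) you assert that $\gnab_X^*$, because it preserves kernels, regular images and joins, ``hence'' preserves Higgins commutators of subobjects. But the Higgins commutator in $\Pt_X(\CC)$ is built from the \emph{coproduct in the fibre} $A+_X B$, and $\gnab_X^*$ does not send this to $\gnab_X^*A+\gnab_X^*B$ in general: the comparison $\gnab_X^*A+\gnab_X^*B\to \gnab_X^*(A+_X B)$ being a regular epimorphism is exactly the content of algebraic coherence (Proposition~\ref{Proposition AC}(iii)), which you are not entitled to assume here. This is why the paper needs Proposition~\ref{Proposition Coherent Functor Preserve Higgins Commutators} (coherent functors preserve Higgins commutators) only for the \emph{final} claim, not for the equivalence itself. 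Relatedly, your identification of the image of the fibre Huq commutator with $[\gnab_X^*A,\gnab_X^*B]\vee[\gnab_X^*A,\gnab_X^*B,X]$ is the Smith-commutator formula of~\cite[Theorem~4.16]{HVdL}, not an obvious consequence of unwinding the Huq construction in $\Pt_X(\CC)$; it would need its own argument. Your (i)$\Rightarrow$(iii) is, as you acknowledge, only a sketch. The cleanest fix is to do what the paper does: invoke~\cite[Theorem~6.5]{CGrayVdL1} for (i)$\Leftrightarrow$(ii).
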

\begin{proof}
The equivalence between (i) and (ii) is part of Theorem~6.5 in~\cite{CGrayVdL1}. The equivalence between (ii) and (iii) is Proposition~6.1 in~\cite{CGrayVdL1}; let us sketch its proof, freely using Proposition~\ref{Normality via commutator}. If (iii) holds then 
\[
[[K,L],X]\leq [K,L,X]\leq [K,L],
\]
so that $[K,L]\normal X$ (the condition \NH\ holds) while
\[
[K,L,X]\leq[K,L]\leq [K,L]\join [[K,L],X]
\]
(the condition \SH\ holds). Conversely, under \SH, we have $[K,L,X]\leq [K,L]\join [[K,L],X]$; now \NH\ implies that $[[K,L],X]\leq [K,L]$, so that condition (iii) holds.

It remains to be shown that any coherent functor preserves Huq commutators of pairs of normal subobjects. The reason is that coherent functors preserve Higgins commutators (by Proposition~6.9 in~\cite{acc}, generalised to Proposition~\ref{Proposition Coherent Functor Preserve Higgins Commutators} below) and binary joins of subobjects (essentially by definition); hence normal closures are preserved as well, which entails preservation of Huq commutators. 
\end{proof}

The fact that algebraic coherence implies condition (iii) may also be viewed as a special case of Theorem~\ref{Theorem Inequality} below, which generalises this condition to an arbitrary finite number of normal subobjects $K_i\normal X$.

\subsection{Structure of the text}
Section~\ref{Section Preliminaries} gives an overview of notations and basic results having to do with cubic extensions and cross-effects. This is used in Section~\ref{Section A Commutator Inequality} where we prove a key technical result: Theorem~\ref{Theorem Inequality}, which says that for $n\geq 2$, given $n$ normal subobjects $K_1$, \dots, $K_n$ of an object $X$ in an algebraically coherent semi-abelian category, the $(n+1)$-fold commutator $[K_1, \dots,K_n,X]$ is contained in the $n$-fold commutator $[K_1, \dots,K_n]$. This result is crucial in Section~\ref{Section n Subobjects Lemma}, where it is used in the proof that higher commutators decompose into joins of nested lower-order commutators: the ``$n$ Subobjects Lemma'', Theorem~\ref{Theorem n Subobjects Lemma}. This easily leads to Corollary~\ref{Corollary Nilpotency} saying that the two above-mentioned types of nilpotency coincide, and to Theorem~\ref{Theorem Decomposition into binary commutators}, which provides a decomposition of any higher commutator into a join of nested binary commutators.

\section{Preliminaries on extensions and cross-effects}\label{Section Preliminaries}

The concept of an \emph{$n$-cubic extension} first occurred in the approach to homology via categorical Galois theory~\cite{Donadze-Inassaridze-Porter, EGVdL}; closely related to this is the fact that \emph{central} $n$-cubic extensions are classified by the higher cohomology groups~\cite{RVdL2}. We need cubic extensions here because they allow an alternative description of a co-smash product and, more generally, a cross-effect as the so-called \emph{direction} of a certain $n$-cubic extension. This allows us to deduce certain information about those cross-effects or co-smash products.

\subsection{Extensions}
We first recall some definitions and properties from~\cite{EGVdL, EGoeVdL}. For $n> 0$ we consider the set $\bar n\coloneq\{1,\dots,n\}$. By an \defn{$n$-fold arrow} $A$ in $\CC$ we mean a contravariant functor 
\[
A\colon \PP(\bar n)^{\op}\to \CC,
\]
where $\PP(\bar n)$ denotes the powerset of $\bar n$. $0$-fold arrows are objects of $\CC$. A morphism between $n$-fold arrows $A$ and $B$ is a natural transformation $f\colon A\To B$. We write $\Arr^0(\CC)\coloneq \CC$, and when $n > 0$, $\Arr^n(\CC)\coloneq\Fun(\PP(\bar n)^{\op}, \CC)$ for the category of $n$-fold arrows and morphisms between them. If $A$ is an $n$-fold arrow and $I\subseteq \bar n$, then $A(I)$ denotes the image of $I$ by the functor $A$. Thus for simplicity, sometimes, the $n$-fold arrow $A$ is written as an \defn{$n$-cube} $A\coloneq(A(I))_{I\subseteq \bar n}$ in $\CC$.

An $n$-fold arrow $E$ is an \defn{$n$-cubic extension} when for all $\emptyset\neq I\subseteq \bar n$ the arrow $E(I) \to \lim_{J\subsetneq I}E(J)$ is a regular epimorphism. (The limit $\lim_{J\subsetneq \bar n}E(J)$ of ``the cube $E$ minus its initial object $E(\bar n)$'' merits special attention and a notation of its own: we write it $\L(E)$, and the induced regular epic comparison arrow is denoted $\lambda_E\colon E(\bar n)\to \L(E)$.) We write $ \Ext^n(\CC) $ for the category of $n$-cubic extensions and morphisms between them. It is a full subcategory of $\Arr^n(\CC)$. Equivalently, an $n$-cubic extension in $\CC$ is a commutative square of solid arrows 
\[
\vcenter{\xymatrix@!0 @R=4em @C=5em{A_1\ar@/^/[rrd]\ar@/_/[rdd] \ar@{.>}[rd] &&\\
		&P \ar@{.>}[r] \ar@{.>}[d] \dottedpullback &B_1\ar[d]\\
		&A_0\ar[r]&B_0}}
\]
in $\Arr^{n-2}(\CC)$ such that all arrows in the diagram are $(n-1)$-cubic extensions. 

We use interchangeably ``regular epimorphism'', ``extension'' and ``$1$-cubic extension'', and ``double extension'' means ``$2$-cubic extension''.

\begin{lemma}\label{Lemma Split Epimorphism of Extensions}
In a regular Mal'tsev category, any split epimorphism between $n$-cubic extensions is an $(n+1)$-cubic extension.
\end{lemma}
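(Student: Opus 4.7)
The plan is to check the $(n+1)$-cubic extension property of the $(n+1)$-fold arrow $F$ associated to the given $f\colon A\To B$ (with section $s\colon B\To A$) one subset at a time. Explicitly, $F(I)=B(I)$ when $n+1\notin I$ and $F(I)=A(I\setminus\{n+1\})$ when $n+1\in I$, the $f_J$ supplying the arrows in the ``new'' direction. Two of the cases for $\emptyset\neq I\subseteq \overline{n+1}$ are essentially immediate: if $I\subseteq \bar n$ then $\lim_{J\subsetneq I}F(J)=\lim_{J\subsetneq I}B(J)$ and the comparison is the one already given by $B$ being an $n$-cubic extension; and for $I=\{n+1\}$ the comparison is $f_{\emptyset}\colon A(\emptyset)\to B(\emptyset)$, which is a split, hence regular, epimorphism.

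The interesting case is $I=I'\cup\{n+1\}$ with $\emptyset\neq I'\subseteq \bar n$. First I would observe that the restrictions $A|_{I'}$ and $B|_{I'}$ to $\PP(I')^{\op}$ inherit the $|I'|$-cubic extension property, so that $\lambda_{A|_{I'}}\colon A(I')\to \L(A|_{I'})$ and $\lambda_{B|_{I'}}\colon B(I')\to \L(B|_{I'})$ are regular epimorphisms. A direct computation of the limit over the punctured cube of $I$ identifies $\lim_{J\subsetneq I}F(J)$ with the pullback $B(I')\times_{\L(B|_{I'})}\L(A|_{I'})$, the right-hand factor being glued in via $\L(f|_{I'})$; the comparison morphism is then $(f_{I'},\lambda_{A|_{I'}})$. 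It fits into the commutative square
\[
\xymatrix@C=3em{
A(I') \ar[r]^-{f_{I'}} \ar@{->>}[d]_-{\lambda_{A|_{I'}}} & B(I') \ar@{->>}[d]^-{\lambda_{B|_{I'}}} \\
\L(A|_{I'}) \ar[r]_-{\L(f|_{I'})} & \L(B|_{I'})
}
\]
whose vertical arrows are regular epimorphisms, and whose horizontal arrows are split epimorphisms with sections $s_{I'}$ and $\L(s|_{I'})$ commuting with the $\lambda$'s by naturality (the functor $\L$, being a limit, preserves split epis).

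The proof would then be completed by the following classical property of a regular Mal'tsev category: in any commutative square with regular-epic vertical arrows and split-epic horizontal arrows whose sections form a second commuting square with the verticals, the induced comparison to the pullback is a regular epimorphism. This is exactly the point where the Mal'tsev hypothesis is indispensable and is, in my view, the main obstacle of the argument: in a general regular category, the $s_{I'}$-lift of a generalised element of the pullback produces an element of $A(I')$ with the correct $f_{I'}$-image but the wrong $\lambda_{A|_{I'}}$-image, and a Mal'tsev operation is needed to correct the discrepancy. This fact is a workhorse in the theory of higher extensions in regular Mal'tsev categories and can be extracted from the machinery recalled in~\cite{EGVdL, EGoeVdL}; once applied to the square above, it completes the verification.
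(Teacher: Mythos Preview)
Your argument is correct. The paper's proof is a one-line citation to Lemma~3.2 and Example~3.14 of~\cite{EGoeVdL}, proceeding by induction on~$n$. You take a somewhat different, more direct route: rather than inducting, you verify the defining condition of an $(n+1)$-cubic extension subset by subset, and in the nontrivial case $I=I'\cup\{n+1\}$ with $I'\neq\emptyset$ you identify $\lim_{J\subsetneq I}F(J)$ explicitly as the pullback $B(I')\times_{\L(B|_{I'})}\L(A|_{I'})$, reducing the problem immediately to the base case---Bourn's result that a split epimorphism of regular epimorphisms is a double extension in any regular Mal'tsev category. The advantage of your approach is that it is essentially self-contained and makes the role of the Mal'tsev hypothesis visible in a single concrete square; the paper's approach is terser and plugs directly into the general inductive framework for higher extensions developed in~\cite{EGoeVdL}. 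In substance, your square is what that cited induction unwinds to.
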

\begin{proof}
	This follows by induction from Lemma~3.2 in~\cite{EGoeVdL} via its Example~3.14.
\end{proof}

A $1$-fold arrow is a \defn{split $1$-cubic extension} when it is a split epimorphism. By induction, an $(n+1)$-fold arrow is a \defn{split $(n+1)$-cubic extension} (an ``$(n+1)$-fold split epimorphism'') when it is a split epimorphism of split $n$-cubic extension. By the above lemma it is indeed an extension; this may be generalised as follows~\cite{EGoeVdL}, to a property which for $n=1$ characterises Mal'tsev categories among regular categories, as shown by Bourn in~\cite{Bourn1996}, cf.\ \cite[Corollary~1.7]{BeBou}. 

\begin{lemma}\label{Lemma Regular Epimorphism of Split Cubes}
In a regular Mal'tsev category, any regular epimorphism between split $n$-cubic extensions is an $(n+1)$-cubic extension.
\end{lemma}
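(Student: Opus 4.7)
I will argue by induction on $n$. The base case $n=1$ is precisely the characterisation of Mal'tsev categories due to Bourn, cited just above the statement: any regular epimorphism between split epimorphisms, viewed as a commutative square, is a double extension.

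For the inductive step, suppose the lemma holds in dimensions smaller than $n\geq 2$, and consider a regular epimorphism $f\colon A\to B$ between split $n$-cubic extensions $A$ and $B$. By definition, $A$ and $B$ arise as split epimorphisms $p_A\colon A^1\to A^0$ and $p_B\colon B^1\to B^0$ between split $(n-1)$-cubic extensions, with compatible splittings. Consequently $f$ decomposes in $\Arr^{n-1}(\CC)$ into a square
\[
\xymatrix{A^1 \ar[r]^{f^1} \ar[d]_{p_A} & B^1 \ar[d]^{p_B} \\ A^0 \ar[r]_{f^0} & B^0}
\]
whose horizontal components $f^0$, $f^1$ are themselves regular epimorphisms between split $(n-1)$-cubic extensions. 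To conclude that this square forms an $(n+1)$-cubic extension, I must check that its four sides, together with the induced comparison arrow $\langle f^1,p_A\rangle\colon A^1\to P$ to the pullback $P\defeq B^1\times_{B^0}A^0$, are all $n$-cubic extensions.

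The sides $p_A$ and $p_B$ are $n$-cubic extensions by Lemma~\ref{Lemma Split Epimorphism of Extensions}, while $f^0$ and $f^1$ are $n$-cubic extensions by the induction hypothesis. The main difficulty lies in the comparison arrow $\langle f^1,p_A\rangle$. First, I observe that $P$ inherits the structure of a split $(n-1)$-cubic extension: as the pullback of split $(n-1)$-cubic extensions along a morphism between such, the necessary split epimorphisms and splittings assemble pointwise. Secondly, because the functor category $\Arr^{n-1}(\CC)$ is itself a regular Mal'tsev category, the case $n=1$ of the present lemma (i.e.\ Bourn's characterisation), applied in $\Arr^{n-1}(\CC)$ to the above square, yields that the comparison $\langle f^1,p_A\rangle$ is a regular epimorphism in $\Arr^{n-1}(\CC)$. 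This comparison is therefore a regular epimorphism between split $(n-1)$-cubic extensions, and the induction hypothesis applied to it completes the argument.
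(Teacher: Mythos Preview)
Your argument is correct, but it takes a considerably longer route than the paper's. The paper's proof is two lines: observe that the $(n+1)$-cube formed by a regular epimorphism $f\colon A\to B$ between split $n$-cubic extensions has $n$ split directions and one regular-epimorphic direction; rotating to view it in any one of the split directions exhibits it as a \emph{split} epimorphism between two $n$-cubes, each of which is a regular epimorphism between split $(n-1)$-cubic extensions and hence (by the same lemma in lower dimension) an $n$-cubic extension. Lemma~\ref{Lemma Split Epimorphism of Extensions} then finishes the job. So both arguments are inductive, but the paper avoids analysing the pullback $P$ and the comparison arrow altogether by passing the problem to Lemma~\ref{Lemma Split Epimorphism of Extensions}.

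What your approach buys is a fully unpacked verification of the extension condition at the level of the defining square, which is pedagogically transparent and makes the role of the Mal'tsev hypothesis explicit (via Bourn's characterisation in the functor category $\Arr^{n-1}(\CC)$). What the paper's approach buys is economy: the symmetry of the cube lets you trade a regular epimorphism of split cubes for a split epimorphism of (by induction) extensions, so that no separate treatment of the pullback comparison is needed.
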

\begin{proof}
	We may view any regular epimorphism between split $n$-cubic extensions as a split epimorphism between $n$-cubic extensions. The result now follows from Lemma~\ref{Lemma Split Epimorphism of Extensions}.
\end{proof}

\begin{lemma}\label{Lemma induced double extension}
In a pointed regular category $\CC$, let $A$ be an $n$-fold arrow, with $n>1$. View $A$ as a morphism $A\colon{\dom(A)\to\cod(A)}$ in $\Arr^{n-1}(\CC)$ between the $(n-1)$-fold arrows $\dom(A)$ and $\cod(A)$. The induced diagram 
\[
\xymatrix@C=4em@R=2em{\dom(A)(\overline{n-1})\ar[r]^-{\lambda_{\dom(A)}}\ar[d]&\L(\dom(A))\ar[d]\\
\cod(A)(\overline{n-1})\ar[r]_-{\lambda_{\cod(A)}}&\L(\cod(A))}
\]
is a $2$-fold arrow $B$ in $\CC$, for which there exists an isomorphism $\tau\colon {\L(A)\to \L(B)}$ such that $\lambda_B=\tau\comp\lambda_A$. If $A$ is an $n$-cubic extension, then $B$ is a double extension in $\CC$.
\end{lemma}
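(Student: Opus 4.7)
The statement splits naturally into two assertions: the existence of the isomorphism $\tau\colon \L(A) \to \L(B)$ with $\lambda_B = \tau\comp \lambda_A$, and the fact that $B$ is a double extension whenever $A$ is an $n$-cubic extension.

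For the first assertion, the plan is to compute $\L(A)=\lim_{J \subsetneq \bar n} A(J)$ by partitioning its indexing poset according to whether $n\in J$. The subsets $J\subsetneq\bar n$ containing $n$ correspond, via $J\leftrightarrow J\setminus\{n\}$, to proper subsets of $\overline{n-1}$, and the restricted limit is $\L(\dom(A))$. The subsets $J\subsetneq\bar n$ not containing $n$ form all of $\PP(\overline{n-1})$, whose limit is $A(\overline{n-1})=\cod(A)(\overline{n-1})$ since $\overline{n-1}$ is maximal in $\PP(\overline{n-1})$. The remaining ``cross'' arrows $A(I\cup\{n\})\to A(I')$ for $I'\subseteq I\subsetneq\overline{n-1}$ encode exactly the compatibility condition over $\L(\cod(A))$ induced by the natural transformation $\dom(A)\To\cod(A)$. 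By the universal property of limits this yields
\[
\L(A)\cong \L(\dom(A))\times_{\L(\cod(A))}\cod(A)(\overline{n-1})=\L(B),
\]
and the equality $\lambda_B=\tau\comp\lambda_A$ is then forced by uniqueness, since both sides exhibit $A(\bar n)=\dom(A)(\overline{n-1})$ as a cone.

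Assuming $A$ is an $n$-cubic extension, I would verify the three defining regular-epi conditions of a double extension for $B$. The comparison $\lambda_B=\tau\comp\lambda_A$ is a regular epimorphism because $\lambda_A$ is, by the extension condition of $A$ at $\bar n$. The bottom edge $\lambda_{\cod(A)}$ is a regular epimorphism because $\cod(A)=A|_{\PP(\overline{n-1})^{\op}}$ is manifestly an $(n-1)$-cubic extension: for $\emptyset\neq I\subseteq\overline{n-1}$, the limit $\lim_{J\subsetneq I}A(J)$ is the same whether taken in $\PP(\bar n)$ or $\PP(\overline{n-1})$. For the remaining right edge $\L(\dom(A))\to \L(\cod(A))$, I would first show by a parallel argument that $\dom(A)$ is also an $(n-1)$-cubic extension: applying the first assertion of the lemma to the restricted sub-cube $A|_{\PP(I\cup\{n\})^{\op}}$ rewrites $\lim_{K\subsetneq I\cup\{n\}}A(K)$ as a pullback whose projection onto $\lim_{J\subsetneq I}A(J\cup\{n\})$ is the pullback of the regular epimorphism $\lambda_{A|_{\PP(I)^{\op}}}$; composing with the given regular epimorphism $A(I\cup\{n\})\to \lim_{K\subsetneq I\cup\{n\}}A(K)$ yields $A(I\cup\{n\})\to \lim_{J\subsetneq I}A(J\cup\{n\})$ a regular epi. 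Combining this with the componentwise regular-epimorphy of $\dom(A)\To\cod(A)$ (itself a consequence of the ``edge'' argument in the regular Mal'tsev setting, cf.\ Lemma~\ref{Lemma Split Epimorphism of Extensions}) and the standard stability properties of cubic extensions from~\cite{EGoeVdL}, one concludes that $\L(\dom(A))\to \L(\cod(A))$ is a regular epimorphism.

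The main obstacle is precisely this last step. A likely cleaner route is to proceed by induction on $n$: the base case $n=2$ is trivial since then $B=A$ (both $\L(\dom(A))$ and $\L(\cod(A))$ reduce to a single vertex of $A$), and the inductive step applies the first part of the present lemma recursively to $\dom(A)$ and $\cod(A)$, rewriting $\L(\dom(A))\to \L(\cod(A))$ as a map between pullbacks over strictly smaller limits, to which the inductive hypothesis then applies.
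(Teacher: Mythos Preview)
Your construction of $\tau$ is correct and is exactly the paper's one-sentence justification: $\L(A)$ and the pullback $\L(B)$ are two computations of the same limit $\lim_{J\subsetneq\bar n}A(J)$.

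For the double-extension claim you and the paper diverge. You aim for the three minimal conditions and single out the right vertical $\L(\dom(A))\to\L(\cod(A))$ as the obstacle; your proposed fix via ``componentwise regular-epimorphy'' plus ``standard stability properties from~\cite{EGoeVdL}'' is vague and risks circularity (that is essentially what this lemma together with Lemma~\ref{Lemma D preserves extensions} is establishing), and the appeal to the regular Mal'tsev setting and Lemma~\ref{Lemma Split Epimorphism of Extensions} is out of place, since only pointed regularity is assumed and nothing here is split. The paper avoids the right vertical entirely: it checks $\lambda_B$, both horizontals $\lambda_{\dom(A)}$ and $\lambda_{\cod(A)}$, and the \emph{left} vertical $A(\bar n)\to A(\overline{n-1})$. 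These are all immediate---the left vertical is an edge of the $n$-cubic extension $A$, and $\lambda_{\dom(A)}$ is a regular epimorphism because $\dom(A)$ is an $(n-1)$-cubic extension, which your own sub-cube argument already proves. The right vertical then drops out of the commutativity of $B$, since the second factor of a regular epimorphism is a regular epimorphism in any regular category. Your inductive alternative would also work but is unnecessary once you make this switch.
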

\begin{proof}
The isomorphism $\tau$ compares two constructions of the limit $\lim_{J\subsetneq \bar n}A(J)$: the given $\L(A)$ on the one hand, and the pullback induced by the square $B$ on the other. If now $A$ is an $n$-cubic extension, then $\lambda_A$, and thus also $\lambda_B$, is a regular epimorphism. Moreover, the domain and codomain of $A$ are $(n-1)$-cubic extensions, so that $\lambda_{\dom(A)}$ and $\lambda_{\cod(A)}$ are regular epimorphisms as well. Since also the vertical arrow on the left of the square $B$ is a regular epimorphism, $B$ is a double extension in $\CC$.
\end{proof}

We now focus on a special type of $n$-cubic extensions: those obtained out of coproducts of a given finite collection of objects.

\subsection{Split extensions obtained via coproducts}\label{Extensions via coproducts}
We write $E(X_1, \dots,X_n)$ for the $n$-cubic extension defined by
\[
E(X_1, \dots,X_n)(I)=\coprod_{i\in I}X_i
\]
for $I\subseteq \bar n$---in particular, $E(X_1, \dots,X_n)(\emptyset)=0$---and
\[
\widehat \pi_{X_i}\colon E(X_1, \dots,X_n)(I\cup \{i\})=\coprod_{k\in I\cup \{i\}}X_k\to E(X_1, \dots,X_n)(I)=\coprod_{k\in I}X_k
\]
whenever $i\in \bar n\setminus I$, which sends $X_i$ to zero and $X_k$ for $k\in I$ to itself via $1_{X_k}$. Note that it is a split $n$-cubic extension. A canonical splitting $\widehat \iota_{X_i}$ of $\widehat \pi_{X_i}$ is 
\[
\iota_{\coprod_{k\in I} X_k}\colon E(X_1, \dots,X_n)(I)=\coprod_{k\in I}X_k\to E(X_1, \dots,X_n)(I\cup \{i\})=\coprod_{k\in I\cup \{i\}}X_k.
\]

\subsection{Cross-effects}\label{Subsection Cross-Effects}\label{Subsection Cosmash}
A co-smash product is a special instance of a \emph{cross-effect}; we recall definitions and properties from~\cite{HVdL, BeBou}. Let $F\colon \CC\to \DD$ be a functor from a pointed category with finite sums $\CC$ to a pointed finitely 
complete category $\DD$. The \textbf{$n$\textsuperscript{th} cross-effect} of $F$ is the functor 
\[
\Cr_n(F)\colon \CC^n\to \DD
\]
defined by
\begin{align*}
\Cr_1(F)(X)&\coloneq\Ker\bigl(F(0)\colon F(X)\to F(0)\bigr)\\
\Cr_n(F)(X_1, \dots,X_n)&\coloneq\Ker\bigl(r_{X_1,\dots,X_n}^F\bigr)
\end{align*}
where 
\[
r_{X_1, \dots,X_n}^F\coloneq \musvv{F(\widehat\pi_1)}{F(\widehat\pi_n)}\colon F(X_1+\cdots+X_n)\to \prod_{k=1}^nF(X_1+\cdots+\widehat X_k+\cdots+X_n).
\]
Equivalently, $\Cr_n(F)(X_1, \dots,X_n)$ is the kernel of the morphism 
\[
\lambda^F_{X_1, \dots,X_n}\colon F\bigl(E(X_1, \dots,X_n)\bigr)(n) \to \L\bigl(F(E(X_1, \dots,X_n))\bigr)
\]
where $\L\bigl(FE(X_1, \dots,X_n)\bigr)$ is the limit of the diagram $F\bigl(E(X_1, \dots,X_n)\bigr)$ restricted to the category $(\PP(\bar n)\setminus \{\bar n\})^{\op}$ and $\lambda^F_{X_1, \dots,X_n}$ is the universally induced comparison morphism. Indeed, $r_{X_1, \dots,X_n}^F$ can be written as $\lambda^F_{X_1, \dots,X_n}$ followed by a monomorphism. When $\DD$ is semi-abelian, $\lambda^F_{X_1, \dots,X_n}$ is a regular epimorphism. 

We also write $F(X_1|\cdots|X_n)$ for $\Cr_n(F)(X_1, \dots,X_n)$. In particular, when $F$ is the identity functor $1_{\CC}$, we obtain
\[
X_1\cosmash\cdots\cosmash X_n\coloneq 1_{\CC}(X_1|\cdots|X_n)=\Cr_n(1_{\CC})(X_1, \dots,X_n),
\]
the co-smash product of $X_1$, \dots, $X_n$ as in~\ref{Higher order}. We find a short exact sequence
\[
\xymatrix@=3em{0 \ar[r] & X_1\cosmash \cdots \cosmash X_n \ar@{{ |>}->}[r]^-{\iota_{X_1, \dots,X_n}} & X_1+ \cdots + X_n \ar@{-{ >>}}[r]^-{\lambda_{X_1, \dots,X_n}} & \L E(X_1, \dots,X_n) \ar[r] & 0}
\]
in the semi-abelian category $\CC$. Notice the \emph{absence} of brackets here: any bracketing may result in a different object.

\subsection{The direction of a higher arrow}\cite{RVdL2}
The \defn{direction} $\D_n(A)$ of an $n$-fold arrow $A$ is the kernel in $\CC$ of the comparison morphism $\lambda_A\colon A(\bar n)\to \L (A)$, universally induced by the property of $\L (A)=\lim_{J\subsetneq \bar n}A(J)$. This defines a functor
\[
\D_n\colon\Arr^n(\CC)\to \CC\colon A\mapsto \D_n(A).
\]
For instance, the co-smash product $X_1\cosmash \cdots \cosmash X_n$ is the direction $\D_n(E(X_1, \dots,X_n))$ of the $n$-cube $E(X_1, \dots,X_n)$. Since limits commute with limits, it is clear that the functor $\D_n$ preserves all kernels. In fact, when we restrict its domain to $\Ext^n(\CC)$, it also preserves extensions:

\begin{lemma}\label{Lemma D preserves extensions}
In a pointed regular category $\CC$, for each $n\geq 1$ the direction functor $\D_n\colon\Ext^n(\CC)\to \CC\colon E\mapsto \D_n(E)$ preserves extensions.
\end{lemma}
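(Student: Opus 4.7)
The plan is to reduce the statement to Lemma~\ref{Lemma induced double extension} together with the definition of a double extension and the pullback-stability of regular epimorphisms. Suppose $f\colon E\to F$ is an extension in $\Ext^n(\CC)$; viewing $f$ as an $(n+1)$-fold arrow gives us an $(n+1)$-cubic extension between the $n$-fold arrows $E$ and $F$. I would first apply Lemma~\ref{Lemma induced double extension} to $f$, which produces the $2$-fold arrow
\[
B=\Bigl(\vcenter{\xymatrix@C=4em@R=2em{E(\bar n)\ar[r]^-{\lambda_{E}}\ar[d]_-{f_{\bar n}}&\L(E)\ar[d]^-{\L(f)}\\ F(\bar n)\ar[r]_-{\lambda_{F}}&\L(F)}}\Bigr),
\]
and this lemma tells us that $B$ is a double extension in $\CC$.

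The next step is to translate ``$B$ is a double extension'' into the existence of the comparison regular epi to the pullback. Writing $P\defeq \L(E)\times_{\L(F)} F(\bar n)$, the induced arrow $E(\bar n)\to P$ is a regular epimorphism. Since $P\to \L(E)$ is obtained by pulling back $\lambda_{F}\colon F(\bar n)\to \L(F)$ along $\L(f)$, and kernels are stable under pullback, its kernel is canonically identified with $\Ker(\lambda_{F})=\D_n(F)$. We thus obtain a commutative diagram whose rows are short exact sequences:
\[
\xymatrix@C=3em{0\ar[r]&\D_n(E)\ar[r]\ar[d]^-{\D_n(f)}&E(\bar n)\ar[r]^-{\lambda_E}\ar@{-{>>}}[d]&\L(E)\ar@{=}[d]\ar[r]&0\\ 0\ar[r]&\D_n(F)\ar[r]&P\ar[r]&\L(E)\ar[r]&0.}
\]

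Finally, I would conclude by pulling back the middle regular epimorphism $E(\bar n)\twoheadrightarrow P$ along the zero morphism $0\to \L(E)$: by pullback-stability of regular epimorphisms in the regular category $\CC$, the induced map on the fibres---namely $\D_n(f)\colon \D_n(E)\to \D_n(F)$---is a regular epimorphism, as required. The only nontrivial input is Lemma~\ref{Lemma induced double extension}, which handles the passage from an $(n+1)$-cubic extension to the double extension $B$; once that is in place, everything else is a routine use of the $3\times 3$ picture above and pullback-stability of regular epis. I do not foresee a real obstacle, since both ingredients are already available in the pointed regular setting.
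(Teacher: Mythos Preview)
Your proposal is correct and follows essentially the same approach as the paper's proof: both invoke Lemma~\ref{Lemma induced double extension} to obtain a double extension, then deduce that the induced map on kernels---the direction---is a regular epimorphism. The paper simply asserts this last step without further comment, whereas you spell it out via the comparison to the pullback $P$ and pullback-stability of regular epimorphisms; your indexing (applying the lemma directly to the $(n+1)$-fold arrow $f$) also handles the case $n=1$ uniformly rather than separately.
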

\begin{proof}
For $n=1$ the result follows immediately. If now $E$ is an $n$-cubic extension in $\CC$, with $n>1$, then Lemma~\ref{Lemma induced double extension} tells us that the right-hand side square in the diagram
\[
\xymatrix@C=4em@R=2em{0 \ar[r] & \D_{n-1}(\dom(E))\ar@{-{ >>}}[d]_-{\D_{n-1}(E)}\ar@{{ |>}->}[r]^-{\ker(\lambda_{\dom(E)})}&\dom(E)(\overline{n-1})\ar@{-{ >>}}[r]^-{\lambda_{\dom(E)}}\ar@{-{ >>}}[d]&\L(\dom(E))\ar@{-{ >>}}[d]\\
0 \ar[r] &  \D_{n-1}(\cod(E))	\ar@{{ |>}->}[r]_-{\ker(\lambda_{\cod(E)})}&\cod(E)(\overline{n-1})\ar@{-{ >>}}[r]_-{\lambda_{\cod(E)}}&\L(\cod(E))}
\]
is a double extension in $\CC$. Hence $\D_{n-1}(E)$ is a regular epimorphism, so an extension, in~$\CC$.
\end{proof}

Consider objects $K_1$, \dots, $K_n$ and $X$ in $\CC$. Then we may view the $(n+1)$-cubic extension $E(K_1, \dots,K_n,X)$ as a morphism
\[
E_X(K_1,\dots,K_n)\to E(K_1,\dots,K_n)
\]
in $\Arr^{n}(\CC)$, where the domain $n$-cubic extension $E_X(K_1,\dots,K_n)$ is the subdiagram determined by the coproducts of the form $X+\coprod_{i\in I} K_i$. In particular, $C_X=E_X(0,\dots,0)$ is the constant functor $\PP(\bar n)^{\op}\to \CC$ with value $X$.

 \begin{lemma}\label{Fundamental lemma for directions} 
Consider objects $K_1$, \dots, $K_n$ and $X$ in a pointed regular category~$\CC$. The functor ${\gnab_{X}^{*}\colon \Pt_X(\CC)\to \CC}$ sends the direction
\[
(\gnab_{X})_{*}(K_1)\cosmash_X\cdots \cosmash_X (\gnab_{X})_{*}(K_n)
\]
of the $n$-cube $E\coloneq E((\gnab_{X})_{*}(K_1), \dots, (\gnab_{X})_{*}(K_n))$ in $\Pt_X(\CC)$ to the direction of the $n$-cube $E_X\coloneq E_X(K_1,\dots,K_n)$ in $\CC$.
\end{lemma}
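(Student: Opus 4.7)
The plan is to identify a split short exact sequence of $n$-cubes in $\CC$ relating the cube obtained from $E$ by pointwise application of $\gnab_X^*$ with the cube $E_X$, and then to apply the direction functor $\D_n$.

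First I would note that since $(\gnab_X)_*$ is a left adjoint, we have $E(I) = (\gnab_X)_*\bigl(\coprod_{i \in I} K_i\bigr)$. Applying $\gnab_X^*$ pointwise therefore produces the $n$-cube $\gnab_X^* \comp E$ in $\CC$ given by $(\gnab_X^* \comp E)(I) = X \flat \coprod_{i \in I} K_i$, whereas $E_X(I) = \coprod_{i \in I} K_i + X$. The defining short exact sequence of the functor $X \flat -$,
\[
0 \to X \flat Y \to Y + X \to X \to 0,
\]
is split by $\iota_X$ and is natural in $Y$; since each face map $\widehat{\pi}_{K_i}$ of $E_X$ acts as the identity on the $X$-component, the pointwise sequences assemble into a split short exact sequence of $n$-cubes in $\CC$:
\[
0 \to \gnab_X^* \comp E \to E_X \to C_X \to 0,
\]
where $C_X$ is the constant $n$-cube with value $X$.

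Next, I apply the direction functor. Since limits commute with limits, $\D_n$ preserves kernels, so $\D_n(\gnab_X^* \comp E) = \Ker\bigl(\D_n(E_X) \to \D_n(C_X)\bigr)$. The direction of a constant cube vanishes: the indexing category $(\PP(\bar n)\setminus\{\bar n\})^{\op}$ is nonempty and connected for every $n \geq 1$, so $\L(C_X) = X$ and $\lambda_{C_X} = 1_X$, whence $\D_n(C_X) = 0$. Therefore $\D_n(\gnab_X^* \comp E) = \D_n(E_X)$.

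Finally, since $\gnab_X^*$ is a right adjoint it preserves all limits, and in particular kernels and the limit $\L$; this yields
\[
\gnab_X^*(\D_n(E)) = \gnab_X^*\bigl(\Ker(\lambda_E)\bigr) = \Ker(\lambda_{\gnab_X^* \comp E}) = \D_n(\gnab_X^* \comp E),
\]
which combined with the previous step gives the equality stated in the lemma. The main point that needs careful verification is the naturality of the split short exact sequence of $n$-cubes, but this is immediate from the fact that the face maps of $E_X$ all fix the $X$-component.
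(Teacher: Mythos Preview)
Your proof is correct and follows essentially the same approach as the paper's: both identify the split short exact sequence of $n$-cubes $0\to \gnab_X^*\comp E\to E_X\to C_X\to 0$, use that $\D_n$ preserves kernels and that $\D_n(C_X)=0$, and then invoke limit-preservation of $\gnab_X^*$ to pass from $\D_n(\gnab_X^*\comp E)$ to $\gnab_X^*(\D_n(E))$. Your presentation is in fact slightly more explicit than the paper's (for instance, spelling out why $\D_n(C_X)=0$ and invoking the left adjoint to identify $E(I)$).
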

\begin{proof} 
The $n$-cube $E$ in $\Pt_X(\CC)$ may be viewed as a split epimorphism $e\colon E_X\to C_X$ of $n$-cubes in $\CC$. By construction, its kernel $K$ is the image of $E$ through the functor ${\gnab_{X}^{*}\colon \Pt_X(\CC)\to \CC}$. Since the direction functor $\D_n$ preserves kernels, we have that $\D_n(\gnab_{X}^{*}(E))=\D_n(K)$ is the kernel of $\D_n(e)\colon \D_n(E_X)\to \D_n(C_X)=0$. Hence $\D_n(E_X)\cong \D_n(\gnab_{X}^{*}(E))$. Since kernels commute with limits, the latter direction $\D_n(\gnab_{X}^{*}(E))$ is nothing but the image through $\gnab_{X}^{*}$ of the direction of $E$ in $\Pt_X(\CC)$. This proves our claim.
\end{proof}

\section{A commutator inequality}\label{Section A Commutator Inequality}

The aim of this section is to prove a key technical result: Theorem~\ref{Theorem Inequality}, which says that for $n\geq 2$, given $n$ normal subobjects $K_1$, \dots, $K_n$ of an object $X$ in an algebraically coherent semi-abelian category, the $(n+1)$-fold commutator $[K_1, \dots,K_n,X]$ is contained in the $n$-fold commutator $[K_1, \dots,K_n]$. This generalises---see the paragraph preceding~\ref{Fibration of points}---the condition \SH+\NH, which by~\cite[Proposition~6.1]{CGrayVdL1} may be seen as the special case where $n=2$, and will turn out to be crucial for the proofs in the next section.

We start with Proposition~\ref{Proposition Coherent Functor Preserve Higgins Commutators}, a generalisation of Proposition~6.9 in~\cite{acc} which says that coherent functors preserve binary Higgins commutators. 

Given objects $X_1$, \dots, $X_n$ in a pointed regular category with binary coproducts, write $\iota_i\colon X_i\to X_1+\cdots+X_n$ for the canonical inclusion. Note that this is a jointly extremally epimorphic family~\cite[Proposition~A.4.18]{Borceux-Bourn}. The next lemma then follows immediately from the definition of a coherent functor, which preserves finite limits and finite jointly extremally epimorphic families of arrows.

\begin{lemma}\label{Lemma Coherent Comparison}
Let $F\colon {\CC\to \DD}$ be a coherent functor between pointed regular categories with binary coproducts. Then for any $X_1$, \dots, $X_n$ in $\CC$, the arrow
\[
\lgroup F(\iota_1)\;\cdots\;F(\iota_n)\rgroup \colon F(X_1)+\cdots + F(X_n)\to F(X_1+\cdots+X_n)
\] 
is a regular epimorphism in $\DD$.
\noproof
\end{lemma}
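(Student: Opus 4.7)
The plan is to exploit the hint in the sentence immediately preceding the lemma: the family $\{\iota_i\colon X_i\to X_1+\cdots+X_n\}_{1\leq i\leq n}$ is jointly extremally epimorphic in $\CC$, and by definition a coherent functor preserves finite limits together with finite jointly extremally epimorphic families. Hence the family $\{F(\iota_i)\colon F(X_i)\to F(X_1+\cdots+X_n)\}_{1\leq i\leq n}$ is jointly extremally epimorphic in $\DD$.

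Next I would compare this family with the canonical coproduct inclusions $\iota_{F(X_i)}\colon F(X_i)\to F(X_1)+\cdots+F(X_n)$ in $\DD$. By the universal property of the coproduct, there is a unique factorisation
\[
F(\iota_i)=\lgroup F(\iota_1)\;\cdots\;F(\iota_n)\rgroup\comp \iota_{F(X_i)}
\]
for each $i$. Suppose now that $\lgroup F(\iota_1)\;\cdots\;F(\iota_n)\rgroup$ factors through a monomorphism $m\colon Y\rightarrowtail F(X_1+\cdots+X_n)$; then each $F(\iota_i)$ factors through $m$ as well, so by joint extremal epimorphicity $m$ must be an isomorphism. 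Therefore the comparison morphism $\lgroup F(\iota_1)\;\cdots\;F(\iota_n)\rgroup$ is an extremal epimorphism.

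It remains to upgrade ``extremal'' to ``regular''. In a regular category, every arrow admits a regular-epi/mono factorisation; applying this to the comparison morphism and using that the mono part must be invertible (by the extremality just established) shows that the comparison morphism is regular epic in $\DD$. This completes the argument.

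I do not expect any obstacle here: the whole lemma is essentially a direct unpacking of the definition of coherence, with the only subtle point being the passage from extremal to regular epimorphism, which is standard in regular categories.
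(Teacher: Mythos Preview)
Your proof is correct and is exactly the argument the paper has in mind: the text immediately preceding the lemma states that the coproduct inclusions form a jointly extremally epimorphic family and that the lemma ``follows immediately from the definition of a coherent functor'', which is precisely what you unpack. The only addition you make is the standard observation that in a regular category extremal epimorphisms coincide with regular ones, which is indeed the intended final step.
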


\begin{proposition}\label{Proposition Coherent Functor Preserve Higgins Commutators}
Let $F\colon {\CC\to \DD}$ be a coherent functor between pointed regular Mal'tsev categories with binary coproducts. Then $F$ preserves Higgins commutators (of arbitrary length).
\end{proposition}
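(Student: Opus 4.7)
The plan is to characterise the Higgins commutator as the image of the direction of a certain $n$-cube built from coproducts (Subsections~\ref{Extensions via coproducts} and~\ref{Subsection Cross-Effects}) and then transport this characterisation across $F$ using Lemmas~\ref{Lemma Regular Epimorphism of Split Cubes} and~\ref{Lemma D preserves extensions}. Fix arrows $k_i\colon K_i\to X$ in $\CC$; by definition $[K_1,\dots,K_n]\leq X$ is the image of the composite
\[
K_1\cosmash\cdots\cosmash K_n\hookrightarrow \coprod\nolimits_i K_i \xrightarrow{\lgroup k_1\;\cdots\;k_n\rgroup} X,
\]
and similarly for $[F(K_1),\dots,F(K_n)]\leq F(X)$.

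First I would compare the two $n$-cubes $E_1\defeq E(F(K_1),\dots,F(K_n))$ and $E_2\defeq F(E(K_1,\dots,K_n))$ in $\DD$. Both are split $n$-cubic extensions, the second because $F$ functorially preserves the canonical splittings of the structure maps $\widehat\pi$. The canonical natural comparison $\phi\colon E_1\To E_2$ has components $\lgroup F(\iota_i)\;|\;i\in I\rgroup\colon \coprod_{i\in I}F(K_i)\to F(\coprod_{i\in I}K_i)$, each of which is a regular epimorphism by Lemma~\ref{Lemma Coherent Comparison}. Viewed as a morphism between split $n$-cubic extensions, $\phi$ is therefore an $(n+1)$-cubic extension by Lemma~\ref{Lemma Regular Epimorphism of Split Cubes}.

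Next, applying the direction functor $\D_n$ in combination with Lemma~\ref{Lemma D preserves extensions} yields a regular epimorphism $\D_n(\phi)\colon \D_n(E_1)\twoheadrightarrow \D_n(E_2)$ in $\DD$. By construction of the co-smash product $\D_n(E_1)=F(K_1)\cosmash\cdots\cosmash F(K_n)$; and since a coherent functor preserves both finite limits and kernels, $\D_n(E_2)=F(\D_n(E(K_1,\dots,K_n)))=F(K_1\cosmash\cdots\cosmash K_n)$. Hence
\[
F(K_1)\cosmash\cdots\cosmash F(K_n)\twoheadrightarrow F(K_1\cosmash\cdots\cosmash K_n)
\]
is a regular epimorphism in $\DD$.

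Naturality of $\phi$ then fits these data into a commutative diagram whose bottom row is the $F$-image of the composite defining $[K_1,\dots,K_n]$, and whose top row is the composite defining $[F(K_1),\dots,F(K_n)]$, linked on the left by the regular epimorphism above. Since $F$, being coherent between regular categories, preserves regular epimorphisms and monomorphisms, and hence images, the image of the bottom composite is $F([K_1,\dots,K_n])$; by definition the image of the top is $[F(K_1),\dots,F(K_n)]$. As precomposition with a regular epimorphism does not alter the image, these two subobjects of $F(X)$ coincide. The most delicate step I anticipate is confirming that $\phi$ really satisfies the hypotheses of Lemma~\ref{Lemma Regular Epimorphism of Split Cubes}---componentwise regular epicity at every $I\subseteq\bar n$, together with $F$-compatibility of the splittings $\widehat\iota$---after which the remainder is a straightforward diagram chase.
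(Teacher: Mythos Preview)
Your proposal is correct and follows essentially the same route as the paper's proof: both compare the split $n$-cubes $E(F(K_1),\dots,F(K_n))$ and $F(E(K_1,\dots,K_n))$ via the canonical comparison, invoke Lemma~\ref{Lemma Coherent Comparison} for componentwise regular epicity, Lemma~\ref{Lemma Regular Epimorphism of Split Cubes} to obtain an $(n+1)$-cubic extension, and Lemma~\ref{Lemma D preserves extensions} for the regular epimorphism on directions, then conclude by regularity of $F$ and invariance of images under precomposition with a regular epimorphism. The paper makes explicit the verification you flag as delicate---that each face of the comparison is a regular epimorphism compatible with the splittings $\widehat\iota$---but otherwise the arguments coincide.
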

\begin{proof}
Consider $n\geq 2$ and let $K_1$, \dots, $K_n$ be subobjects of an object $X$ in $\CC$, each represented by a monomorphism denoted $k_i\colon K_i\to X$. Since the $n$-cubic extension $E(K_1, \dots,K_n)$ from~\ref{Extensions via coproducts} is a split $n$-cubic extension, so is the $n$-cube $F(E(K_1, \dots,K_n))$. By Lemma~\ref{Lemma Coherent Comparison}, the canonical comparison morphism 
\[
r\colon E(F(K_1), \dots,F(K_n))\to F(E(K_1, \dots,K_n))
\]
is a regular epimorphism between split $n$-cubic extensions. Note that the components of $r$ do indeed commute with the compatible splittings of~\ref{Extensions via coproducts}. All faces of $r$ are regular epimorphisms of split epimorphisms, since for all $I\subsetneq \bar n$ and any $i\in \bar n\setminus I$, both squares in
\[
\xymatrix@C=5em{E(F(X_1), \dots, F(X_n))(I\cup \{i\}) \ar@<-.5ex>[d]_-{\widehat \pi_{F(X_i)}} \ar@{-{ >>}}[r]^-{(F(\iota_k))_{k\in I\cup \{i\}}} & F(E(X_1, \dots, X_n))(I\cup \{i\}) \ar@<-.5ex>[d]_-{F(\widehat \pi_{X_i})}\\
E(F(X_1), \dots, F(X_n))(I) \ar@<-.5ex>[u]_-{\widehat \iota_{F(X_i)}} \ar@{-{ >>}}[r]_-{(F(\iota_k))_{k\in I}} & F(E(X_1, \dots, X_n))(I) \ar@<-.5ex>[u]_-{F(\widehat \iota_{X_i})}}
\]
commute. Hence by Lemma~\ref{Lemma Regular Epimorphism of Split Cubes}, the $(n+1)$-cube $r$ is an extension. Since the coherent functor $F$ preserves all limits, the direction of $F(E(K_1, \dots,K_n))$ is the object $F(K_1\cosmash \cdots \cosmash K_n)$. By Lemma~\ref{Lemma D preserves extensions}, the morphism 
\[
\D_{n}(r)\colon F(K_1)\cosmash \cdots \cosmash F(K_n)\to F(K_1\cosmash \cdots \cosmash K_n)
\]
which we find when taking directions is a regular epimorphism. Since the coherent functor~$F$ is regular, it preserves image factorisations. Hence $F([K_1, \dots,K_n])$ is the image of the morphism
\[
F\bigl(\xymatrix@1@C=5em{K_1\cosmash \cdots \cosmash K_n \ar[r]^-{\iota_{K_1, \dots,K_n}} & K_1+ \cdots + K_n \ar[r]^-{\lgroup k_1\;\cdots\;k_n\rgroup} & X}\bigr),
\]
which is also the image of
\[
F(\lgroup k_1\,\cdots\,k_n\rgroup)\comp F(\iota_{K_1, \dots,K_n})\comp \D_{n}(r)=\lgroup F(k_1)\,\cdots\,F(k_n)\rgroup\comp \iota_{F(K_1), \dots,F(K_n)}, 
\]
which is nothing but the commutator $[F(K_1), \dots,F(K_n)]$.
\end{proof}

If $\CC$ is a semi-abelian algebraically coherent category, then for any object $X$ of~$\CC$ we may apply this result to the functor $\gnab_{X}^{*}\colon \Pt_X(\CC)\to \CC\colon (p,s)\mapsto \Ker(p)$, and thus we obtain the following higher-order version of the condition \SH+\NH, which by Theorem~4.6 in~\cite{HVdL} and Proposition~6.1 in~\cite{CGrayVdL1} may be seen as the special case where $n=2$:

\begin{theorem}\label{Theorem Inequality}
In a semi-abelian algebraically coherent category, consider $n\geq 2$, and $n$ subobjects $K_1$, \dots, $K_n$ of an object $X$. Write $\overline K_i\normal X$ for the normal closure of $K_i\leq X$. Then 
\begin{equation}\label{Inequality}
[K_1, \dots,K_n,X]\leq [\overline K_1, \dots,\overline K_n].
\end{equation}
In particular, if $K_1$, \dots, $K_n\normal X$, then
\[
[K_1, \dots,K_n,X]\leq [K_1, \dots,K_n].
\]
\end{theorem}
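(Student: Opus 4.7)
The plan is to realize $[K_1,\dots,K_n,X]$ as a subobject of the direction $\D_n(E_X(K_1,\dots,K_n))$ and then, using algebraic coherence, to cover this direction by a regular epimorphism from the ``simple'' co-smash product $X\flat K_1\cosmash\cdots\cosmash X\flat K_n$, whose image in $X$ turns out to be exactly $[\overline K_1,\dots,\overline K_n]$.

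First I would view the $(n+1)$-cube $E(K_1,\dots,K_n,X)$ as a split epimorphism of $n$-cubes $E_X(K_1,\dots,K_n)\to E(K_1,\dots,K_n)$, the splitting including $\coprod_{i\in I}K_i$ into $X+\coprod_{i\in I}K_i$. Since $\D_n$ preserves kernels and the splitting makes the induced sequence on directions short exact, we obtain
\[
0\to K_1\cosmash\cdots\cosmash K_n\cosmash X\to \D_n(E_X(K_1,\dots,K_n))\to K_1\cosmash\cdots\cosmash K_n\to 0.
\]
Composing the left-hand monomorphism with the embedding $\D_n(E_X(K_1,\dots,K_n))\hookrightarrow X+\coprod_{i=1}^nK_i$ followed by $\mu=\lgroup k_1\,\cdots\,k_n\;1_X\rgroup\colon K_1+\cdots+K_n+X\to X$ exhibits $[K_1,\dots,K_n,X]$ as a subobject of the image $\widetilde I\leq X$ of $\D_n(E_X(K_1,\dots,K_n))$ under $\mu$.

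Next I would apply the argument of Proposition~\ref{Proposition Coherent Functor Preserve Higgins Commutators} to the coherent functor $\gnab_X^*\colon \Pt_X(\CC)\to \CC$: the comparison $n$-cube morphism between $E(X\flat K_1,\dots,X\flat K_n)$ and $\gnab_X^*(E((\gnab_X)_*K_1,\dots,(\gnab_X)_*K_n))$ is at each vertex a regular epimorphism by Lemma~\ref{Lemma Coherent Comparison}, and both source and target are split $n$-cubic extensions, so by Lemma~\ref{Lemma Regular Epimorphism of Split Cubes} the whole morphism is an $(n+1)$-cubic extension. Taking directions via Lemma~\ref{Lemma D preserves extensions} and identifying the target through Lemma~\ref{Fundamental lemma for directions} then yields a regular epimorphism
\[
X\flat K_1\cosmash\cdots\cosmash X\flat K_n\to \D_n(E_X(K_1,\dots,K_n)).
\]

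Finally, naturality identifies the composite $X\flat K_1\cosmash\cdots\cosmash X\flat K_n\to X$ with $\lgroup\phi_1\,\cdots\,\phi_n\rgroup\circ \iota_{X\flat K_1,\dots,X\flat K_n}$, where $\phi_i=\mus{k_i}{1_X}\circ\kappa_{X,K_i}\colon X\flat K_i\to X$ has image the normal closure $\overline K_i=K_i\vee[K_i,X]$: the inclusion $K_i\vee[K_i,X]\leq\ima(\phi_i)$ is immediate from $\phi_i\circ\eta_{K_i}^X=k_i$ and the inclusion $K_i\cosmash X\leq X\flat K_i$, while the reverse follows by factoring $\phi_i$ through the action of $X$ on the normal subobject $\overline K_i$. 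Factoring each $\phi_i$ through its image and invoking Proposition~\ref{Higgins properties}(2), together with the regular epimorphism $X\flat K_1\cosmash\cdots\cosmash X\flat K_n\to \overline K_1\cosmash\cdots\cosmash\overline K_n$, then yields $\widetilde I=[\overline K_1,\dots,\overline K_n]$, whence the desired inequality. The main technical obstacle will be the naturality step: a careful diagram chase confirming that on the $i$-th summand the composite $X\flat K_i\to X\flat(K_1+\cdots+K_n)\hookrightarrow K_1+\cdots+K_n+X\xrightarrow{\mu}X$ really equals $\phi_i$, via the naturality of $\kappa_{X,-}$ and the identity $\mu\circ(\iota_{K_i}+1_X)=\mus{k_i}{1_X}$.
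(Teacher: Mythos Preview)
Your proposal is correct and follows essentially the same strategy as the paper: both arguments hinge on Lemma~\ref{Fundamental lemma for directions} to identify $\D_n(E_X(K_1,\dots,K_n))$ with $\gnab_X^*$ of the co-smash product in $\Pt_X(\CC)$, on the regular epimorphism $(X\flat K_1)\cosmash\cdots\cosmash(X\flat K_n)\to \D_n(E_X(K_1,\dots,K_n))$ coming from algebraic coherence, and on the fact that the image of $\phi_i=\mus{k_i}{1_X}\comp\kappa_{X,K_i}$ is the normal closure $\overline K_i$. The only difference is cosmetic: the paper packages the second and third steps as an explicit Higgins-commutator computation inside $\Pt_X(\CC)$ (constructing the auxiliary morphisms $k_i'$ to $X\times X$ and invoking Proposition~\ref{Proposition Coherent Functor Preserve Higgins Commutators} for $\gnab_X^*$), whereas you bypass that detour and identify the composite $\lgroup\phi_1\,\cdots\,\phi_n\rgroup\comp\iota_{X\flat K_1,\dots,X\flat K_n}$ directly---which is arguably a bit cleaner, but not a genuinely different argument.
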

\begin{proof}
On the one hand, the co-smash product $K_1\cosmash \cdots \cosmash K_n\cosmash X$ is the direction of the $(n+1)$-cubic extension $E(K_1, \dots,K_n,X)$. The commutator on the left hand side of the inequality~\eqref{Inequality} is the image of the composite morphism
\[
\xymatrix@C=5em{K_1\cosmash \cdots \cosmash K_n\cosmash X \ar[r]^-{\iota_{K_1, \dots,K_n,X}} & K_1+ \cdots + K_n+ X \ar[r]^-{\lgroup k_1\,\cdots\,k_n\,1_X\rgroup} & X.}
\]
On the other hand, for each $1\leq i\leq n$ we may consider the point
\[
(\gnab_{X})_{*}(K_i)\defeq(\mus{0}{1_X}\colon K_i+X\to X,\; \iota_{X}\colon X\to K_i+X).
\]
The morphism 
\[
k'_i\defeq\bigl\lgroup\begin{smallmatrix}
k_i & 1_X\\
0 & 1_X
\end{smallmatrix}\bigr\rgroup
\colon K_i+X\to X\times X
\]
in $\CC$ may be viewed as a morphism of points with codomain
\[
(\pi_2\colon {X\times X\to X},\; \musv{1_X}{1_X}\colon X\to X\times X).
\]
Its image is a subobject $(p_i\colon K'_i\to X,\; s_i\colon X\to K'_i)$ of the point $(\pi_2,\musv{1_X}{1_X})$. These $(p_i,s_i)\leq(\pi_2,\musv{1_X}{1_X})$ have a Higgins commutator in $\Pt_X(\CC)$, which coincides with the image of the composite arrow
\[
\resizebox{\textwidth}{!}{\xymatrix@=4em{(\gnab_{X})_{*}(K_1)\cosmash_X\cdots \cosmash_X (\gnab_{X})_{*}(K_n) \ar[rr]^-{\iota_{(\gnab_{X})_{*}(K_1), \dots,(\gnab_{X})_{*}(K_n)}} && (\gnab_{X})_{*}(K_1)+_X\cdots +_X (\gnab_{X})_{*}(K_n) \ar[r]^-{\lgroup k_1' \;\cdots\; k_n' \rgroup} & X\times X,}}
\]
considered as a morphism $\theta$ in $\Pt_X(\CC)$. Here the first arrow is the canonical inclusion, and the second arrow is induced by the $k'_i\colon K_i+X\to X\times X$. Since coherent functors preserve image factorisations, Proposition~\ref{Proposition Coherent Functor Preserve Higgins Commutators} tells us that the change-of-base functor $\gnab_{X}^{*}\colon \Pt_X(\CC)\to \CC$ sends this commutator to the image of the composite 
\[
\xymatrix{(X\flat K_1)\cosmash\cdots \cosmash (X\flat K_n) \ar[r] & (X\flat K_1)+\cdots + (X\flat K_n) \ar[r] & X,}
\]
where the first arrow is the canonical inclusion, and the second arrow is induced by the $\lgroup k_i\; 1_X\rgroup\comp \kappa_{X,K_i}\colon X\flat K_i\to K_i+ X\to X$. The image of this latter morphism being the normal closure $\overline K_i$ of $K_i$ in $X$---see~\cite{MM-NC},~\cite{MFVdL4}, or~\cite{acc}---the  functor $\gnab_{X}^{*}\colon \Pt_X(\CC)\to \CC$ sends the given commutator in $\Pt_X(\CC)$ to the commutator $[\overline K_1, \dots,\overline K_n]$ on the right hand side of the inequality~\eqref{Inequality}. Further, note that the canonical comparison arrow
\[
\overline{\rho}\colon(X\flat K_1)\cosmash\cdots \cosmash (X\flat K_n)\to \gnab_X^*\bigl((\gnab_{X})_{*}(K_1)\cosmash_X\cdots \cosmash_X (\gnab_{X})_{*}(K_n)\bigr)
\]
is a regular epimorphism by algebraic coherence. Indeed, the functor $X\flat - :\CC\to \CC$ is coherent and $ \gnab_X^*\bigl((\gnab_{X})_{*}(K_1)+_X\cdots+_X (\gnab_{X})_{*}(K_n)\bigr) =X\flat\bigl( E(K_1,\dots,E_n)\bigr)$. The claim holds because the $(n+1)$-cube
\[
E(X\flat K_1,\dots,X\flat K_n)\to X\flat\bigl (E(K_1,\dots,E_n)\bigr)
\]
is an $(n+1)$-extension, so that in the diagram
\[
\xymatrix@!0@R=4em @C=17em{ (X\flat K_1)\cosmash\cdots \cosmash(X\flat K_n)\ar@{{ |>}-{>}}[d]_-{\iota_{X\flat K_1,\dots,X\flat K_n}}\ar@{-{ >>}}[r]^-{\bar \rho}& \gnab_X^*\bigl((\gnab_{X})_{*}(K_1)\cosmash_X\cdots \cosmash_X (\gnab_{X})_{*}(K_n)\bigr)\ar@{{ |>}-{>}}[d]^-{\gnab^{*}_{X}(\iota_{(\gnab_{X})_{*}(K_1),\dots,(\gnab_{X})_{*}(K_n)})}\\
	X\flat K_1+\cdots +X\flat K_n\ar@{-{ >>}}[r]^-{\rho=\lgroup 1_X\flat \iota_{1}\;\cdots\;1_X\flat \iota_{n}\rgroup} \ar@{-{ >>}}[d]_-{\lambda_{X\flat K_1,\dots,X\flat K_n}}	& X\flat(K_1+\cdots +K_n)\ar@{-{ >>}}[d]^-{\gnab^{*}_X(\lambda_{(\gnab_{X})_{*}(K_1),\dots,(\gnab_{X})_{*}(K_n)})} \\
	\L(E(X\flat K_1,\dots,X\flat K_n)\ar@{-{ >>}}[r]	&\L\bigl(X\flat (E(K_1,\dots,K_n))\bigr)}
\]
the bottom square is a double extension, and then the top morphism $\bar \rho $ is a regular epimorphism.

Furthermore, the restriction of the limit cone
\[
\L E(K_1, \dots,K_n,X)\Rightarrow E(K_1, \dots,K_n,X)|_{(\PP(\bar n)\setminus \{\bar n\})^{\op}}
\] 
to the $n$-cube $E_X(K_1,\dots, K_n)$ is still a cone over $E_X(K_1,\dots, K_n)$. Thus, via the universal property of the limit $\L\bigl(E_X(K_1,\dots,K_n)\bigr)$ and by Lemma~\ref{Fundamental lemma for directions}, we find the dotted arrow on the right in the diagram
\[
\resizebox{\textwidth}{!}{
	\xymatrix@C=2em@R=4em{0 \ar[r] & K_1\cosmash \cdots \cosmash K_n \cosmash X \ar@{.>}[d]_{\xi} \ar@{{ |>}->}[rr]^-{\iota_{X_1, \dots,X_n}} && K_1+ \cdots + K_n + X \ar@{=}[d] \ar@{-{ >>}}[rr]^-{\lambda_{X_1, \dots,X_n}} && \L E(K_1, \dots,K_n,X) \ar@{.>}[d] \ar[r] & 0\\
		0 \ar[r] & \gnab_X^*\bigl((\gnab_{X})_{*}(K_1)\cosmash_X\cdots \cosmash_X (\gnab_{X})_{*}(K_n)\bigr) \ar@{{ |>}->}[rr]_-{\ker\bigl(\lambda_{E_X(K_1,\dots,K_n)}\bigr)} && K_1+ \cdots + K_n + X \ar@{-{ >>}}[rr]_-{\lambda_{E_X(K_1,\dots,K_n)}} && \L\bigl(E_X(K_1,\dots,K_n)\bigr) \ar[r] & 0,}}
\]
which displays a morphism of short exact sequences. Note that the bottom sequence is exact, because the kernel in $\CC$ of an arrow in~$\Pt_X(\CC)$ coincides with the kernel in~$\CC$ of its image through $\gnab_X^*$. Via the universal property of strong epimorphisms, the dotted arrow $\xi$ on the left induces the inequality~\eqref{Inequality}, through the diagram
\[
\xymatrix@!0@C=10em@R=4em{K_1\cosmash\cdots \cosmash K_n\cosmash X \ar[rd]^-{\xi} \ar@{-{ >>}}[dd] && (X\flat K_1)\cosmash \cdots \cosmash (X\flat K_n) \ar@{-{ >>}}[ld]_-{\overline{\rho}} \ar@{-{ >>}}[dd] \\
	& \gnab_X^*\bigl((\gnab_{X})_{*}(K_1)\cosmash_X\cdots \cosmash_X (\gnab_{X})_{*}(K_n)\bigr) \ar@{-{ >>}}[rd] \ar[dd]_(.25){\gnab_X^*(\theta)}\\
	[K_1, \dots,K_n,X] \ar@{{ >}.>}[rr] \ar@{{ >}->}[rd] && [\overline K_1, \dots,\overline K_n] \ar@{{ >}->}[ld] \\
	& \gnab_X^*(X\times X)=X}
\]
This completes the proof.
\end{proof}

\section{The \texorpdfstring{``$n$ Subobjects Lemma''}{n Subobjects Lemma}}\label{Section n Subobjects Lemma}
In this section we extend Theorem~\ref{Three Subobjects Lemma}---the \emph{Three Subobjects Lemma} of~\cite{acc}, valid in any algebraically coherent semi-abelian category---to higher-order Higgins commutators. This is Theorem~\ref{Theorem n Subobjects Lemma} below. Its proof, whose validity strongly depends on Theorem~\ref{Theorem Inequality}, is a variation on the proof Theorem~7.1 in~\cite{acc}. Another key ingredient of the proof is the fact that for any given objects $X_1$, \dots, $X_n$ of $\CC$, the $n$\textsuperscript{th} cross-effect of the identity functor $X_1\cosmash \cdots\cosmash X_n$ is the $(n-1)$\textsuperscript{st} cross-effect of the binary cosmash product functor $X_1\cosmash-\colon \CC \to \CC$. We give a full proof of this result, which (in its most general form) occurs in the currently only partially published manuscript~\cite{HVdL-arXiv} as Lemma~2.20. It, and its proof, are a direct generalisation of Proposition~2.12 in~\cite{Actions}. A short argument based on the $3\times 3$-lemma is given in the proof of \cite[Corollary~6.17c]{BeBou}.

\begin{proposition}
Let $\CC$ be a pointed finitely complete and finitely cocomplete category. Then there is a natural isomorphism 
\[ 
X_1\cosmash \cdots\cosmash X_n \cong (X_1\cosmash -)(X_2|\cdots|X_n)= \Cr_{n-1}(X_1\cosmash -)(X_2, \dots,X_n) 
\]
for objects $X_1$, \dots, $X_n$ in $\CC$. In particular, we find $X_1\cosmash \cdots\cosmash X_n$ as a kernel of the comparison morphism
\[
\musvv{1_{X_1}\cosmash \widehat \pi_{X_2}}{1_{X_1}\cosmash \widehat \pi_{X_n}}\colon X_1\cosmash \coprod_{k=2}^{n}X_k\to \prod_{i=2}^{n}(X_1\cosmash\coprod^n_{\substack{k=2\\k\neq i}}X_k).
\]
\end{proposition}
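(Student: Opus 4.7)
The plan is to identify both sides as kernels of suitable arrows out of $X_1+Y$, where $Y\coloneq X_2+\cdots+X_n$, and to compare them inside $X_1\cosmash Y\leq X_1+Y$. Writing $r_i\colon X_1+Y\to Y_i$ for the canonical ``kill the $X_i$-summand'' arrow---so $Y_1=Y$ and $Y_i=X_1+\coprod_{k\neq i,\,k\geq 2}X_k$ for $i\geq 2$---the cosmash product $X_1\cosmash\cdots\cosmash X_n=\ker(r_{X_1,\dots,X_n})$ is by construction the intersection $\bigcap_{i=1}^n\ker(r_i)$.

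The first key step is to observe that, for each $i\geq 2$, the arrow $r_i=1_{X_1}+\widehat\pi_{X_i}$ is the value at $\widehat\pi_{X_i}\colon Y\to \coprod_{k\neq i,\,k\geq 2}X_k$ of the functor $X_1+(-)$. By naturality of the transformation $r_{X_1,-}\colon X_1+(-)\Rightarrow X_1\times(-)$, the arrow $r_i$ thus restricts along $X_1\cosmash Y\hookrightarrow X_1+Y$ to the arrow $1_{X_1}\cosmash\widehat\pi_{X_i}\colon X_1\cosmash Y\to X_1\cosmash \coprod_{k\neq i,\,k\geq 2}X_k$ appearing in the statement. Viewed inside $X_1+Y$, the cross-effect $\Cr_{n-1}(X_1\cosmash-)(X_2,\dots,X_n)$ therefore equals $X_1\cosmash Y\cap \bigcap_{i=2}^n\ker(r_i)$.

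Next I would verify $X_1\cosmash\cdots\cosmash X_n\leq X_1\cosmash Y$ as subobjects of $X_1+Y$. Since $r_1=\widehat\pi_{X_1}$, we have $\ker(r_1)=\ker(\widehat\pi_{X_1})$. For the other defining kernel of $X_1\cosmash Y=\ker(\widehat\pi_{X_1})\cap \ker(\widehat\pi_Y)$, post-composing any $r_i$ ($i\geq 2$) with the projection $\mus{1_{X_1}}{0}\colon Y_i\to X_1$ reproduces $\widehat\pi_Y=\mus{1_{X_1}}{0}$, because both arrows act as $1_{X_1}$ on $X_1$ and as $0$ on every $X_k$ with $k\geq 2$. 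Hence $\ker(r_i)\leq\ker(\widehat\pi_Y)$, and so
\[
X_1\cosmash\cdots\cosmash X_n=\bigcap_{i=1}^n\ker(r_i)\leq \ker(\widehat\pi_{X_1})\cap\ker(\widehat\pi_Y)=X_1\cosmash Y.
\]
Combining with the previous paragraph,
\[
\Cr_{n-1}(X_1\cosmash-)(X_2,\dots,X_n)=X_1\cosmash Y\cap \bigcap_{i=2}^n\ker(r_i)=\bigcap_{i=1}^n\ker(r_i)=X_1\cosmash\cdots\cosmash X_n,
\]
and naturality in $X_1,\dots,X_n$ follows from the universal property of kernels. The only delicate bookkeeping is the explicit check that $r_i$ restricts, on $X_1\cosmash Y$, to $1_{X_1}\cosmash\widehat\pi_{X_i}$; but this is pure naturality and needs nothing beyond the stated hypotheses of finite completeness and finite cocompleteness.
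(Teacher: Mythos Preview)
Your proof is correct and follows essentially the same strategy as the paper's: both identify the two sides as subobjects of $X_1+Y$ (with $Y=\coprod_{k\geq 2}X_k$) and show they coincide, using naturality of $\iota_{X_1,-}$ to relate $r_i$ with $1_{X_1}\cosmash\widehat\pi_{X_i}$, and a projection onto $X_1$ to show that the $n$-fold cosmash already lies in $X_1\cosmash Y$. The presentation differs: you phrase everything as equalities between intersections of kernels inside $X_1+Y$, whereas the paper assembles a single commutative diagram with an explicit monomorphism $v$ (your naturality step) and a retraction $w$ (your post-composition with $\mus{1_{X_1}}{0}$), and then reads off mutually inverse arrows $\iota$, $\iota''$ between the two kernels. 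Your version is a little more direct; the paper's makes the induced isomorphism visible as a concrete pair of arrows.
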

\begin{proof}
Our strategy is to construct the diagram in Figure~\ref{Figure Cosmash products as cross-effects},
\begin{figure}
\[
\xymatrix@!0@R=7em@C=12em{(X_1\cosmash -)(X_2|\cdots|X_n) \ar@<-.5ex>@{.>}[r]_-{\iota} \ar@{{ |>}->}[d] & X_1\cosmash \cdots\cosmash X_n\ar@{{ |>}->}[d]^-{\iota_{X_1, \dots,X_n}} \ar@<-.5ex>@{.>}[l]_-{\iota''} \ar@{.>}[ld]^-{\iota'}&\\
X_1\cosmash \coprod_{k=2}^{n}X_k \ar@{{ |>}->}[r]^-{\iota_{X_1,\coprod_{k=2}^{n}X_k}} \ar@{}[rd]|{\text{($\natural$)}} \ar[d]_-{u=\musvv{1_{X_1}\cosmash \widehat \pi_{X_2}}{1_{X_1}\cosmash \widehat \pi_{X_n}}}&\coprod_{k=1}^{n}X_k \ar[d]_-{r_{X_1, \dots,X_n}} \ar@{-{ >>}}[r]^-{r_{X_1,\coprod_{k=2}^{n}X_k}} \ar@{}[rd]|(.3){\text{($\sharp$)}} & X_1\times \coprod_{k=2}^{n}X_k\\
\prod_{i=2}^{n}(X_1\cosmash\coprod^n_{\substack{k=2\\k\neq i}}X_k) \ar[r]_-{v}& \prod_{i=1}^{n}\coprod^n_{\substack{k=1\\k\neq i}}X_k \ar[ru]_-{w} &}
\]
\caption{Cosmash products as cross-effects}\label{Figure Cosmash products as cross-effects}
\end{figure}
whose top vertical arrows are kernels of the bottom vertical arrows, and whose middle row is a short exact sequence. Once we have all solid arrows, $\iota$ and $\iota'$ are induced; it then suffices that $v$ is a monomorphism for~$\iota'$ to factor over the kernel of $u$ as an inverse $\iota''$ of $\iota$.

We let 
\[
v\coloneq \musv{0}{\prod_{i=2}^nv_i}\colon \prod_{i=2}^{n}(X_1\cosmash\coprod^n_{\substack{k=2\\k\neq i}}X_k) \to \coprod^n_{k=2}X_k \times \prod_{i=2}^{n}\coprod^n_{\substack{k=1\\k\neq i}}X_k
\]
where
\[
v_i\coloneq \iota_{X_1,\coprod^n_{\substack{\scriptscriptstyle k=2\\ \scriptscriptstyle k\neq i}}X_k}\colon X_1\cosmash\coprod^n_{\substack{k=2\\k\neq i}}X_k \to X_1+\coprod^n_{\substack{k=2\\k\neq i}}X_k.
\]
Then clearly $v$ is a monomorphism, because all of the $v_i$ are. We see that the square~($\natural$) commutes, because $\widehat \pi_{X_1}\comp\iota_{X_1,\coprod_{k=2}^{n}X_k}=0$, while for all $i\geq 2$ we have
\[
\widehat \pi_{X_i}\comp\iota_{X_1,\coprod_{k=2}^{n}X_k}= (1_{X_1}+\widehat \pi_{X_i})\comp\iota_{X_1,\coprod_{k=2}^{n}X_k}=\iota_{X_1,\coprod^n_{\substack{\scriptscriptstyle k=2\\ \scriptscriptstyle k\neq i}}X_k}\comp (1_{X_1}\cosmash \widehat \pi_{X_i})
\]
by naturality of $\iota$. 

We let 
\[
w\coloneq \musv{\lgroup 1_{X_1} \; 0 \; \cdots \; 0\rgroup\circ p_{\coprod_{k=1}^{n-1}X_k}}{p_{\coprod_{k=2}^{n}X_k}}\colon \prod_{i=1}^{n}\coprod^n_{\substack{k=1\\k\neq i}}X_k \to X_1\times \coprod_{k=2}^{n}X_k
\]
where the $p_Y$ denote product projections, and show that the triangle ($\sharp$) commutes:
\begin{align*}
\lgroup 1_{X_1} \; 0 \; \cdots \; 0\rgroup\comp p_{\coprod_{k=1}^{n-1}X_k}\comp 	r_{X_1, \dots,X_n}=\lgroup 1_{X_1} \; 0 \; \cdots \; 0\rgroup
\end{align*}
and
\begin{align*}
	p_{\coprod_{k=2}^{n}X_k}\comp r_{X_1, \dots,X_n}=\widehat\pi_{X_1}.
\end{align*}
This finishes the proof.
\end{proof}

The next theorem is our paper's main result: it extends \cite[Theorem 7.1]{acc} to arbitrary $n\geq 3$.

\begin{theorem}[The $n$ Subobjects Lemma]\label{Theorem n Subobjects Lemma}
Let $\CC$ be an algebraically coherent semi-abelian category. If $K_1$, \dots, $K_n$ are normal subobjects of an object $X$ in $\CC$, where $n\geq 3$, then 
\begin{multline*}
[K_1, \dots,K_n]=[[K_1,K_2],K_3, \dots,K_n]\vee [K_2,[K_1,K_3],K_4, \dots,K_n]\vee\cdots\\
\cdots\vee [K_2,K_3, \dots,K_{n-1},[K_1,K_n]].	
\end{multline*}
\end{theorem}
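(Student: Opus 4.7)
My plan is to split into the two inclusions and to handle the nontrivial direction by induction on $n\geq 3$. The easy inclusion $\geq$ follows at once from Proposition~\ref{Higgins properties}: since $K_{1}\normal X$, Proposition~\ref{Normality via commutator} gives $[K_{1},X]\leq K_{1}$, so $[K_{1},K_{i}]\leq K_{1}$ by monotonicity (property~(3)) for each $i\geq 2$; combining this with permutation invariance (property~(1)) yields
\[
[K_{2},\dots,K_{i-1},[K_{1},K_{i}],K_{i+1},\dots,K_{n}]\leq [K_{2},\dots,K_{i-1},K_{1},K_{i+1},\dots,K_{n}]=[K_{1},\dots,K_{n}]
\]
for every $i$, and taking the join over $i$ delivers $\geq$.

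For the harder inclusion $\leq$ I would proceed by induction on $n$. The base case $n=3$ is the Three Subobjects Lemma (Theorem~\ref{Three Subobjects Lemma}). For the inductive step, the key technical input is the cross-effect identity just proven,
\[
K_{1}\cosmash K_{2}\cosmash \cdots \cosmash K_{n}\cong \Cr_{n-1}(K_{1}\cosmash -)(K_{2},\dots,K_{n}),
\]
which realises the $n$-fold cosmash product as the kernel of the canonical comparison arrow $K_{1}\cosmash (K_{2}+\cdots+K_{n})\to \prod_{i=2}^{n}K_{1}\cosmash(K_{2}+\cdots+\widehat{K_{i}}+\cdots+K_{n})$. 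Passing to images in $X$, the commutator $[K_{1},\dots,K_{n}]$ sits inside the binary Higgins commutator $[K_{1},K_{2}\join\cdots\join K_{n}]$, which by iterated application of the join decomposition formula (property~(6)) expands into a join of sub-commutators of the form $[K_{1},(K_{i})_{i\in S}]$ indexed by non-empty $S\subseteq\{2,\dots,n\}$. I would then adapt the diagrammatic argument of Theorem~7.1 in~\cite{acc} to this $(n-1)$-fold cross-effect setting: Theorem~\ref{Theorem Inequality} serves to absorb any extraneous $X$-entry that appears into the surrounding normal commutator (all relevant subobjects remain normal by~\NH, which is automatic under algebraic coherence), and the inductive hypothesis is used to rewrite the lower-order sub-commutators as joins of nested binary commutators of the prescribed form.

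The main obstacle will be the combinatorial bookkeeping inside the inductive step: tracking how the many terms produced by expanding the join decomposition and by unpacking the cross-effect recombine into precisely the $n-1$ summands displayed on the right, with every contribution to $[K_{1},\dots,K_{n}]$ ultimately landing in one of the brackets $[K_{2},\dots,K_{i-1},[K_{1},K_{i}],K_{i+1},\dots,K_{n}]$. Theorem~\ref{Theorem Inequality} is the mechanism that makes this collapse possible, and is what distinguishes the algebraically coherent setting from weaker ones in which the two flavours of nilpotency genuinely diverge.
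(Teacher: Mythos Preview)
Your easy inclusion contains a counting error. After replacing $[K_{1},K_{i}]$ by $K_{1}$ via monotonicity, the commutator $[K_{2},\dots,K_{i-1},K_{1},K_{i+1},\dots,K_{n}]$ has only $n-1$ entries, so permutation invariance cannot turn it into the $n$-fold $[K_{1},\dots,K_{n}]$; the displayed equality is simply false. The correct argument is property~(4) of Proposition~\ref{Higgins properties}:
\[
[K_{2},\dots,K_{i-1},[K_{1},K_{i}],K_{i+1},\dots,K_{n}]=[[K_{1},K_{i}],K_{2},\dots,\widehat{K_{i}},\dots,K_{n}]\leq [K_{1},K_{i},K_{2},\dots,\widehat{K_{i}},\dots,K_{n}]=[K_{1},\dots,K_{n}].
\]
This is easily repaired, but as written it is wrong.

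The hard inclusion, however, is not a proof but a list of ingredients, and the way you propose to combine them does not work. First, the paper does \emph{not} argue by induction on $n$; your inductive hypothesis would yield decompositions of lower-order commutators into fully nested \emph{binary} commutators, which is the content of Theorem~\ref{Theorem Decomposition into binary commutators}, not of the present statement (whose right-hand side consists of $(n-1)$-fold commutators with a single nested binary factor). Second, expanding $[K_{1},K_{2}\vee\cdots\vee K_{n}]$ via the join formula is a dead end: in that expansion $[K_{1},\dots,K_{n}]$ is itself one of the summands, and you cannot bound a summand of a join by the remaining ones.

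The idea you are missing is the following. Via the cross-effect identity and algebraic coherence (Proposition~\ref{Proposition AC}), the paper shows that $K_{1}\cosmash\cdots\cosmash K_{n}$ is covered, modulo the retract $K_{2}\cosmash\cdots\cosmash K_{n}$, by $(K_{1}\flat K_{2})\cosmash\cdots\cosmash(K_{1}\flat K_{n})$. Protomodularity then covers each $K_{1}\flat K_{k}$ by $K_{k}+(K_{1}\cosmash K_{k})$, and the ternary cover $(U+V)\cosmash X$ by $(U\cosmash X)+(V\cosmash X)+(U\cosmash V\cosmash X)$ expands the whole thing into $3^{n-1}$ cosmash terms involving only the $K_{k}$ and the $K_{1}\cosmash K_{k}$. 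Taking images in $X$ gives a join of Higgins commutators of lengths between $n-1$ and $2(n-1)$. It is only \emph{here} that Theorem~\ref{Theorem Inequality} enters: every term of length $\geq n$ contains some $K_{k}$ together with $[K_{1},K_{k}]$, so one may enlarge that $K_{k}$ to $X$ by monotonicity and then absorb the extra $X$-entry via Theorem~\ref{Theorem Inequality}, collapsing the term into one of the $(n-1)$ commutators on the right-hand side. This term-by-term collapse is the substance of the proof, and your sketch does not supply it.
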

\begin{proof}
We consider Figure~\ref{Figure A}, 
\begin{figure}
\resizebox{\textwidth}{!}
{\xymatrix@!0@R=3.5em@C=7em{ &0\ar[d]&&0\ar[d]&&0\ar[d]&\\
0 \ar@{.>}[r] & K_1\cosmash K_2\cosmash\cdots\cosmash K_n\ar@{{ |>}.>}[rr] \ar@{{ |>}->}[dd] && A\ar@{{ |>}->}[dd] \ar@<-.5ex>@{.>}[rr] \ar@{{ |>}->}[dd] && K_2\cosmash\cdots\cosmash K_n \ar@{{ |>}->}[dd] \ar@<-.5ex>@{.>}[ll] \ar@{.>}[r] & 0\\\\
0 \ar[r] & K_1\cosmash \coprod_{k=2}^{n}K_k \ar@{{ |>}->}[rr]^-{\jmath_{K_1,\coprod_{k=2}^{n}K_k}} \ar[dd]_-{u} && K_1\flat\coprod_{k=2}^{n}K_k \ar[dd]_-{\musvv{1_{K_1}\flat \widehat \pi_{K_2}}{1_{K_1}\flat \widehat \pi_{K_n}}} \ar@<-.5ex>[rr]_-{\tau^{K_1}_{\coprod_{k=2}^{n}K_k}} && \coprod_{k=2}^{n}K_k \ar@<-.5ex>[ll]_-{\eta^{K_1}_{\coprod_{k=2}^{n}K_k}}\ar[r] \ar[dd]^-{r_{K_2, \dots,K_n}} &0\\\\
0\ar[r]&\prod_{i=2}^{n}(K_1\cosmash\coprod^n_{\substack{k=2\\k\neq i}}K_k) \ar@{{ |>}->}[rr]_-{\prod_{i=2}^{n} \jmath_{1,i}} && \prod_{i=2}^{n}(K_1\flat \coprod^n_{\substack{k=2\\k\neq i}}K_k) \ar@<-.5ex>[rr]_-{\prod_{i=2}^{n}\tau_i} &&\prod_{i=2}^{n}\coprod^n_{\substack{k=2\\k\neq i}}K_k \ar@<-.5ex>[ll]_-{\prod_{i=2}^{n}\eta_i} \ar[r] &0}}
\caption{Constructing the object $A$}\label{Figure A}
\end{figure}
where $\jmath_{1,i}$ is the kernel of the canonical epimorphism
\[
\eta_i\coloneq\eta^{K_1}_{\coprod^n_{\substack{\scriptscriptstyle k=2\\\scriptscriptstyle k\neq i}}K_k}\colon K_1\flat \coprod^n_{\substack{k=2\\k\neq i}}K_k\to \coprod^n_{\substack{k=2\\k\neq i}}K_k,
\]
which is split by the monomorphism $\tau_i$ as in~\ref{Fibration of points}. Note that the upper row in this diagram is exact, because kernels commute with kernels, and any split epimorphism is the cokernel of its kernel. As in~\ref{Subsection Cosmash}, we may see that both rows in the diagram 
\[
\resizebox{\textwidth}{!}
{\xymatrix@R=3em@C=3em{
0\ar[r]&\bigdiamond_{k=2}^{n}(K_1\flat K_k) \ar@{{ |>}->}[rr]^-{\iota_{K_1\flat K_2, \dots,K_1\flat K_n}} \ar@{->}[d]_-{\alpha}&&\coprod_{k=2}^{n}(K_1\flat K_k) \ar@{-{ >>}}[d]_-{\lgroup1_{K_1}\flat \iota_{K_2}\;\cdots\;1_{K_1}\flat \iota_{K_n} \rgroup } \ar@{-{ >>}}[rr]^-{\lambda_{K_1\flat K_2, \dots,K_1\flat K_k}}&& \L E(K_1\flat K_2, \dots,K_1\flat K_n) \ar[r] \ar@{-{ >>}}[d]^-{} &0\\
0\ar[r]&A\ar@{{ |>}->}[rr]_-{} && K_1\flat \coprod_{k=2}^{n} K_k \ar@{-{ >>}}[rr]_-{\lambda^{K_1\flat-}_{K_2, \dots,K_n}}&& \L(K_1\flat E(K_2, \dots,K_n)) \ar[r] &0}}
\]
are short exact sequences. By algebraic coherence in the guise of Proposition~\ref{Proposition AC}, the canonical comparison morphism 
\[
E(K_1\flat K_2, \dots,K_1\flat K_n)\to K_1\flat E(K_2, \dots,K_n)
\]
is a regular epimorphism between split $(n-1)$-cubic extensions. Hence, by Lemma~\ref{Lemma Split Epimorphism of Extensions}, it represents an $n$-cubic extension. Via Lemma~\ref{Lemma D preserves extensions}, this implies that $\alpha$ is a regular epimorphism.

The bottom row in the diagram 
\[
\resizebox{\textwidth}{!}
{\xymatrix@!0@R=5em@C=7em{0\ar[r]&B \pullback \ar@{{ |>}->}[rr]\ar@{->}[d]_-{\beta} && \bigdiamond_{k=2}^{n}(K_1\flat K_k)
\ar@{-{ >>}}[d]_-{\alpha} \ar@<-.5ex>@{->}[rr]_-{\tau^{K_1}_{K_2}\diamond\cdots\diamond\tau^{K_1}_{K_n}} &&K_2\cosmash\cdots\cosmash K_n \ar@<-.5ex>@{->}[ll]_-{\eta^{K_1}_{K_2}\cosmash\cdots\cosmash \eta^{K_1}_{K_n}} \ar[r] \ar@{=}[d] &0\\
0\ar[r]&K_1\cosmash K_2\cosmash\cdots\cosmash K_n\ar@{{ |>}->}[rr]&& A \ar@<-.5ex>@{->}[rr] &&K_2\cosmash\cdots\cosmash K_n \ar@<-.5ex>@{->}[ll] \ar[r] &0}}
\]
is obtained as in Figure~\ref{Figure A}. The diagram's left hand side square is a pullback, so that the morphism $\beta$ is a regular epimorphism. 

For each $k\in\{2, \dots,n\}$, using protomodularity, we deduce from the split short exact sequence
\[
\xymatrix{0 \ar[r] & K_1\cosmash K_k \ar@{{ |>}->}[r] & K_1\flat K_k \ar@<-.5ex>[r] & K_k \ar@<-.5ex>[l] \ar[r] & 0}
\]
of~\cite[Proposition 2.7]{Actions} that $K_1\flat K_k$ is covered by $K_k +(K_1\cosmash K_k)$. Lemma~2.12 of~\cite{HVdL}, the proof of~\cite[Proposition~2.22]{HVdL} and protomodularity together imply that 
\begin{equation}\label{Decomposition}
	\text{$(U+V)\cosmash X$ is covered by $(U\cosmash X) + (V\cosmash X) + (U\cosmash V\cosmash X)$.}
\end{equation}
Using Corollary 2.14 in~\cite{Actions}, we see that $(K_1\flat K_2)\cosmash\cdots\cosmash(K_1\flat K_n)$ is covered by $(K_2+(K_1\cosmash K_2))\cosmash\cdots\cosmash(K_n+(K_1\cosmash K_n))$; we may now use \eqref{Decomposition} iteratively and summandwise into a sum of $3^{n-1}$ terms in which no $\flat$ appear. We write this sum of $3^{n-1}$ terms as $K_2\cosmash \cdots\cosmash K_n+ S$, where $S$ is the sum of the remaining $3^{n-1}-1$ terms.

In the diagram
\[
\resizebox{\textwidth}{!}
{\xymatrix@!0@R=4em@C=6em{0\ar[r]& (K_2\cosmash \cdots\cosmash K_n)\flat S \pullback \ar@{{ |>}->}[rr]^-{\kappa_{K_2\cosmash \cdots\cosmash K_n,S}} \ar@{->}[d]_-{\gamma} && K_2\cosmash \cdots\cosmash K_n+ S
\ar@{-{ >>}}[d]_-{} \ar@<-.5ex>@{->}[rr]_-{\lgroup 1_{K_2\cosmash\cdots\cosmash K_n}\;0\rgroup} &&K_2\cosmash \cdots\cosmash K_n \ar@<-.5ex>@{->}[ll]_-{\iota_{K_2\cosmash\cdots\cosmash K_n}} \ar[r] \ar@{=}[d] &0\\
0\ar[r]&B \ar@{{ |>}->}[rr]&& (K_1\flat K_2)\cosmash\cdots\cosmash(K_1\flat K_n) \ar@<-.5ex>@{->}[rr] &&K_2\cosmash\cdots\cosmash K_n \ar@<-.5ex>@{->}[ll] \ar[r] &0}}
\]
the left-hand square is a pullback square, so that the morphism $\gamma$ is a regular epimorphism. This shows that the cosmash product $K_1\cosmash \cdots\cosmash K_n$ is covered by $(K_2\cosmash \cdots\cosmash K_n)\flat S$, which itself is covered by $S + (K_2\cosmash \cdots\cosmash K_n) \cosmash S$. Considering $K_1$, \dots, $K_n$ as subobjects of $X$ and taking images in $X$ now yields the join decomposition $[K_1, \dots,K_n]=\overline S \vee[ [K_2, \dots,K_n], \overline S]$ of the Higgins commutator where $\overline S$ denotes the image of $S$ in $X$. 

The sum $S$ is of the form 
\begin{multline*}
(K_1\cosmash K_2)\cosmash K_3\cosmash\cdots\cosmash K_n+ K_2\cosmash (K_1\cosmash K_3)\cosmash K_4\cosmash\cdots\cosmash K_n+\cdots\\
\cdots+ K_2\cosmash\cdots \cosmash K_{n-1}\cosmash (K_1\cosmash K_n)+T
\end{multline*}
for some object $T$ containing the remaining terms. The image of $T$ in $X$, denoted by $\overline T$, is a join of higher commutators, each of length between $n-1$ and $2(n-1)$. The only higher commutator in $\overline T$ with length equal to $n-1$ is of the form
\[ 
[[K_1,K_2], \dots, [K_1,K_n] ].
\]
By Proposition~\ref{Higgins properties} and the fact that $[K_1,K_k]\leq [X,K_k]\leq K_k$, we have 
\[
 [[K_1,K_2], [K_1,K_3],\dots, [K_1,K_n]] \leq [[K_1,K_2],K_3,K_4, \dots,K_n ].
\]
All higher commutators in $\overline T$ with length $n$ are of the form
\[ 
[K_2, \dots,K_n ,[K_1,K_k]] 
\]
for some $k\in\{2, \dots, n\}$. It then follows by Proposition~\ref{Higgins properties} and Theorem~\ref{Theorem Inequality} that
\begin{align*}
[K_2, \dots,K_n ,[K_1,K_k]] &\leq [K_2, \dots,K_{k-1},X,K_{k+1}, \dots,K_n ,[K_1,K_k]]\\
&\leq [K_2, \dots,K_{k-1},[K_1,K_k],K_{k+1}, \dots,K_n].
\end{align*}
The higher commutators in $\overline T$ of length $n<l\leq 2(n-1)$, with $l=n+p$, are of the form
\[ 
[K_2, \dots,K_{k_1-1},K_{k_1},[K_1,K_{k_1}],K_{k_1+1}, \dots,K_{k_p-1},K_{k_p},[K_1,K_{k_p}],K_{k_p+1}, \dots,K_n ],
\]
where $2\leq k_1<\cdots<k_p\leq n$.
Using Theorem \ref{Theorem Inequality}, Proposition~\ref{Higgins properties}, and the property \NH, we see that 
\begin{align*}
&[K_2, \dots,K_{k_1-1},K_{k_1},[K_1,K_{k_1}],K_{k_1+1}, \dots,K_{k_p-1},K_{k_p},[K_1,K_{k_p}],K_{k_p+1}, \dots,K_n ]\\
& \leq [K_2, \dots,K_{k_1-1},[K_1,K_{k_1}],K_{k_1+1}, \dots, K_n, X ] \\
& \leq [K_2, \dots,K_{k_1-1},[K_1,K_{k_1}],K_{k_1+1}, \dots, K_n ].
\end{align*}
Hence 
\[
\overline T \leq \bigvee_{k=2}^{n}[K_2, \dots,K_{k-1},[K_1,K_{k}],K_{k+1}, \dots, K_n ],
\]
so that 
\[ 
\overline S = \bigvee_{k=2}^{n}[K_2, \dots,K_{k-1},[K_1,K_{k}],K_{k+1}, \dots, K_n ].
\]
Therefore
\begin{align*}
[K_1, \dots,K_n] &= \overline S \vee[ [K_2, \dots,K_n], \overline S]\\
&= \bigvee_{k=2}^{n}[K_2, \dots,K_{k-1},[K_1,K_{k}],K_{k+1}, \dots, K_n ]\\
 &\quad\vee \bigl[ [K_2, \dots,K_n], \bigvee_{k=2}^{n}[K_2, \dots,K_{k-1},[K_1,K_{k}],K_{k+1}, \dots, K_n ] \bigr] \\
&\leq\bigvee_{k=2}^{n}[K_2, \dots,K_{k-1},[K_1,K_{k}],K_{k+1}, \dots, K_n ], 
\end{align*}
because $\bigvee_{k=2}^{n}[K_2, \dots,K_{k-1},[K_1,K_{k}],K_{k+1}, \dots, K_n ] $ is a normal subobject of the commutator $[K_2, \dots,K_n]$: to see this, combine Proposition~\ref{Higgins properties} (6) with Proposition~\ref{Normality via commutator}. Indeed for all $k\in \{2, \dots,n\}$, Proposition~\ref{Higgins properties}, Theorem \ref{Theorem Inequality} and Proposition~\ref{Normality via commutator} give
\begin{align*}
&[ [K_2, \dots,K_n], [K_2, \dots,K_{k-1},[K_1,K_{k}],K_{k+1}, \dots,K_n ] ]\\
& \leq [K_2, \dots,K_{k-1},K_k, [K_1,K_{k}],K_{k+1}, \dots, K_n ] \\
& \leq [K_2, \dots,K_{k-1},X,[K_1,K_{k}],K_{k+1}, \dots, K_n ]\\
&\leq [K_2, \dots,K_{k-1},[K_1,K_{k}],K_{k+1}, \dots, K_n ].
\end{align*}
The claim follows from~\cite[Proposition 6.2]{MM-NC}. Now, since by Proposition~\ref{Higgins properties} we have
 \begin{align*} 
 [K_2, \dots,K_{k-1},[K_1,K_k], K_{k+1}, \dots,K_n] &=[[K_1,K_k],K_2, \dots,K_{k-1}, K_{k+1}, \dots,K_n]\\
 &\leq [K_1,K_k,K_2, \dots,K_{k-1}, K_{k+1}, \dots,K_n]\\
 &= [K_1, \dots,K_n] 
\end{align*}
for all $k\in \{2, \dots,n\}$, it follows that 
\[
[K_1, \dots,K_n]=\bigvee_{k=2}^{n}[K_2, \dots,K_{k-1},[K_1,K_{k}],K_{k+1}, \dots K_n],
\]
which proves our claim.
\end{proof}

Via Proposition~\ref{Higgins properties} (1), we may use induction on Theorem~\ref{Theorem n Subobjects Lemma} to obtain:

\begin{corollary}\label{Corollary Nilpotency}
In an algebraically coherent semi-abelian category,
\[
[\underbrace{X, \dots,X}_{\text{$n$ terms}}]=[\cdots[[\underbrace{X,X],X], \dots,X}_{\text{$n$ terms}}]
\]
for any object $X$ and any $n\geq 3$.\noproof
\end{corollary}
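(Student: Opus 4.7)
The plan is to prove, by induction on an integer $m\geq 1$, the following slightly stronger auxiliary claim: for every normal subobject $Y\normal X$, the $(m+1)$-slot Higgins commutator $[Y,X,\dots,X]$ (with $m$ trailing copies of $X$) coincides with the left-nested binary commutator $[[\cdots[[Y,X],X],\cdots],X]$ (with $m$ outer copies of $X$). The corollary is then recovered by specialising to $Y=X$ and $m=n-1$: the left-hand side becomes $[X,\dots,X]$ with $n$ entries, while the right-hand side becomes the $n$-fold iterated binary commutator.

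The base case $m=1$ of the auxiliary claim is tautological, since both sides reduce to the binary Higgins commutator $[Y,X]$. For the inductive step from $m-1$ to $m\geq 2$, I apply Theorem~\ref{Theorem n Subobjects Lemma} to $[Y,X,\dots,X]$ with $K_1=Y$ and $K_2=\cdots=K_{m+1}=X$; note that $m+1\geq 3$, so the theorem is applicable. This expresses it as a join of $m$ summands, each of which is an $m$-slot Higgins commutator containing a single $[Y,X]$ in some position and $X$ in the remaining positions. By the symmetry of the Higgins commutator (Proposition~\ref{Higgins properties}\,(1)), all these summands coincide with $[[Y,X],X,\dots,X]$.

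Since $[Y,X]$ is itself normal in $X$---algebraic coherence implies the condition~\NH---the inductive hypothesis can be applied with $Y'\defeq[Y,X]$ and $m-1$ trailing copies of $X$. This identifies $[[Y,X],X,\dots,X]$ with the left-nested binary commutator having $Y'=[Y,X]$ innermost and $m-1$ further outer copies of $X$, which is precisely the left-nested commutator with $Y$ innermost and $m$ outer copies of $X$, completing the induction.

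The main point to watch is the propagation of normality: each application of Theorem~\ref{Theorem n Subobjects Lemma} requires the arguments to be normal subobjects of $X$, and this is guaranteed inductively by \NH\ (available thanks to algebraic coherence). Beyond this bookkeeping, I do not anticipate any further conceptual obstacle.
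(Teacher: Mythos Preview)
Your proposal is correct and takes essentially the same approach as the paper, which simply states that the result follows by induction on Theorem~\ref{Theorem n Subobjects Lemma} together with the symmetry property Proposition~\ref{Higgins properties}\,(1). Your auxiliary generalisation to a normal subobject $Y\normal X$ is a clean way to organise the induction and make the use of \NH\ explicit, but the underlying argument is identical.
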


This result generalises to normal subobjects $K_1$, \dots, $K_n$ of $X$ as follows.

\begin{theorem}\label{Theorem Decomposition into binary commutators}
Let $\CC$ be an algebraically coherent semi-abelian category. Consider normal subobjects $K_1$, \dots, $K_n$ of an object $X$ in~$\CC$, where $n\geq 3$. Then $[K_1, \dots,K_n]$ decomposes as a join of binary commutators:
\begin{equation}\label{Binary decomposition of higher commutator}
[K_1, \dots,K_n]= \bigvee_{\sigma\in S_n}[\cdots[[K_{\sigma(1)},K_{\sigma(2)}],K_{\sigma(3)}], \dots ,K_{\sigma(n)}].
\end{equation}
\end{theorem}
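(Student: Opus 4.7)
The easy inclusion---that each left-normed binary commutator appearing on the right-hand side of~\eqref{Binary decomposition of higher commutator} is contained in $[K_1,\dots,K_n]$---follows immediately from Proposition~\ref{Higgins properties}(1) and~(5), so the real content of the theorem is the reverse inequality. My plan is to obtain it by iterating Theorem~\ref{Theorem n Subobjects Lemma}, repackaged as an induction.

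More precisely, I would prove by induction on $n\geq 2$ the slightly stronger claim
\[
[L, K_1, \dots, K_{n-1}] = \bigvee_{\sigma \in S_{n-1}} [\cdots[[L, K_{\sigma(1)}], K_{\sigma(2)}], \dots, K_{\sigma(n-1)}]
\]
for any normal subobjects $L, K_1, \dots, K_{n-1}$ of $X$ in~$\CC$. The cases $n=2$ (trivial) and $n=3$ (which reduces, via Proposition~\ref{Higgins properties}(1), to Theorem~\ref{Three Subobjects Lemma}) serve as the base. For the induction step, I would apply Theorem~\ref{Theorem n Subobjects Lemma} to $[L, K_1, \dots, K_{n-1}]$ with $L$ as the first entry and then invoke Proposition~\ref{Higgins properties}(1) to move each resulting binary commutator $[L, K_j]$ to the front, obtaining
\[
[L, K_1, \dots, K_{n-1}] = \bigvee_{j=1}^{n-1} \bigl[[L, K_j], K_1, \dots, \widehat{K_j}, \dots, K_{n-1}\bigr].
\]
Since $[L, K_j]\normal X$ by~\NH---which holds in any algebraically coherent semi-abelian category---each summand is an $(n-1)$-fold Higgins commutator of $n-1$ normal subobjects, with the head $[L, K_j]$ now ready to play the role of~$L$ in a smaller instance of the claim. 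Feeding each summand into the induction hypothesis yields a join over $\tau\in S_{n-2}$ of left-normed binary commutators whose innermost entry is~$L$; letting $(j,\tau)$ range over its $(n-1)(n-2)! = (n-1)!$ admissible values exhausts every element of $S_{n-1}$, completing the induction.

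Specialising the stronger claim to $L=K_1$ (and relabelling $K_1,\dots,K_{n-1}$ there as $K_2,\dots,K_n$) bounds $[K_1, \dots, K_n]$ above by the subjoin of the right-hand side of~\eqref{Binary decomposition of higher commutator} consisting of those terms with $\sigma(1)=1$, which together with the easy direction establishes the equality. The main point to check carefully is that at every iteration the head commutator remains a normal subobject of $X$, so that Theorem~\ref{Theorem n Subobjects Lemma} can be reapplied; this is guaranteed precisely by~\NH. I expect the remaining difficulty to be purely combinatorial---keeping track of how the permutations get assembled across the induction step---rather than conceptual.
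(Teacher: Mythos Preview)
Your proposal is correct and essentially matches the paper's own proof: both argue by induction on~$n$, applying Theorem~\ref{Theorem n Subobjects Lemma} to peel off one entry into a binary commutator, invoking \NH\ to keep the resulting head normal, and then feeding each $(n-1)$-ary summand back into the induction hypothesis. Your formulation---proving the sharper $(n-1)!$-term decomposition with a fixed innermost entry and then sandwiching it against the $n!$-term join via the easy direction---slightly streamlines the bookkeeping the paper carries out with its auxiliary objects $[K_1,\dots,K_n]^{-}$ and $[K_1,\dots,K_n]^{+}$.
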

\begin{proof}
We prove this by induction on $n$. For $n=3$, given normal subobjects $K_1$, $K_2$ and $K_3$ of an object $X$, Theorem~\ref{Theorem n Subobjects Lemma} implies
 \[ 
 [[K_2,K_3],K_1]\leq [K_1,K_2,K_3]=[[K_1,K_2],K_3]\vee [K_2,[K_1,K_3]],
 \]
which is also the content of Theorem~7.1 in~\cite{acc}. This proves our claim for $n=3$. Now take $n\geq 4$ and assume that the claim is valid for all Higgins commutators of length strictly less than $n$. 

Using \NH\ and the induction hypothesis, we may see that each term of the decomposition of $ [K_1, \dots,K_n]$ provided by Theorem~\ref{Theorem n Subobjects Lemma} may be further decomposed into binary Higgins commutators. If we set $ L_k=[K_1,K_k]\normal X$, then by the induction hypothesis we have
\begin{align*}
&[K_2, \dots,K_{k-1}, L_k,K_{k+1}, \dots,K_n]\\
&=[ L_k,K_2, \dots,K_{k-1},K_{k+1}, \dots,K_n]\\
&=\bigvee_{\sigma\in P} [ \cdots[[ L_{\sigma(k)},K_{\sigma(2)} ], K_{\sigma(3)}], \dots ,K_{\sigma(k-1)} ], K_{\sigma(k+1)}], K_{\sigma(k+2)}], \dots,K_{\sigma(n)} ]\\
&=\bigvee_{\sigma\in P} [ \cdots[[[K_1, K_{\sigma(k)}] ,K_{\sigma(2)}],K_{\sigma(3)}], \dots ,K_{\sigma(k-1)} ] , K_{\sigma(k+1)}], \dots,K_{\sigma(n)} ]
\end{align*}
for any $k\in \{2, \dots, n\}$, where $P\leq S_n$ is the group of permutations of $\{2, \dots,n\}$.
 Hence $ [K_1, \dots,K_n]$ may be decomposed into a join of binary commutators as 
\begin{equation*}\label{binary decomposition form}
 \bigvee_{k=2}^n \bigvee_{\sigma\in P} [ \cdots[[[K_1, K_{\sigma(k)}] ,K_{\sigma(2)}],K_{\sigma(3)}], \dots ,K_{\sigma(k-1)}], K_{\sigma(k+1)}], \dots,K_{\sigma(n)}].
\end{equation*}
If we write this subobject of $X$ as $[K_1, \dots,K_n]^{-}$, and we denote the join of binary commutators on the right hand side of~\eqref{Binary decomposition of higher commutator} by $[K_1, \dots,K_n]^{+}$, then Proposition~\ref{Higgins properties} tells us that
\[ 
[K_1, \dots,K_n]^{+}\leq [K_1, \dots,K_n].
\]
We now have 
\[
[K_1, \dots,K_n]=[K_1, \dots,K_n]^{-}\leq [K_1, \dots,K_n]^{+}\leq [K_1, \dots,K_n],
\]
which proves that $[K_1, \dots,K_n]=[K_1, \dots,K_n]^{+}$.
\end{proof}

\section*{Acknowledgements}

Thanks to the referee, whose comments and suggestions have substantially improved the presentation of the text and simplified some of the proofs.


\end{document}